\numberwithin{equation}{section}
\newtheorem{proposition}{Proposition}[section]
\newtheorem{lemma}[proposition]{Lemma}
\newtheorem{definition}[proposition]{Definition}
\newenvironment{proofof}[1]{\smallskip\noindent\emph{\textbf{Proof of #1.}}%
\hspace{1pt}}{\hspace{-5pt}{\nobreak\quad\nobreak\hfill\nobreak%
$\square$\vspace{8pt}\par}\smallskip\goodbreak}
\newcommand{\pint}[1]{\mathaccent23{#1}}
\newcommand{\Lloc}[1]{\mathbf{L^{#1}_{loc}}}
\newcommand{\C}[1]{\mathbf{C^{#1}}}
\newcommand{\Cc}[1]{\mathbf{C_c^{#1}}}
\newcommand{\modulo}[1]{{\left|#1\right|}}
\newcommand{\norma}[1]{{\left\|#1\right\|}}
\newcommand{\reali}{{\mathbb{R}}}
\newcommand{\naturali}{{\mathbb{N}}}
\newcommand{\BV}{\mathbf{BV}}
\newcommand{\Lip}{\mathbf{Lip}}
\newcommand{\spt}{\mathop{\mathrm{spt}}}
\renewcommand{\epsilon}{\varepsilon}
\renewcommand{\phi}{\varphi}
\renewcommand{\theta}{\vartheta}
\renewcommand{\L}[1]{\mathbf{L^#1}}
\newcommand{\W}[1]{\mathbf{W^{#1}}}
\renewcommand{\d}[1]{\mathinner{\mathrm{d}{#1}}}
\newcommand{\caratt}[1]{\mathbf{\chi}_{\strut #1}}
\newcommand{\Caption}[1]{
  \begin{minipage}{0.75\linewidth}
    \caption{\small{#1}}
  \end{minipage}}
 \title{A Mixed ODE-PDE Model for Vehicular Traffic}
\author{Rinaldo M.~Colombo \\ \small Unit\`a INdAM \\
  \small Universit\`a degli Studi di Brescia \\ \small Via Branze, 38
  \\ \small 25123 Brescia, Italy \\ \small \texttt{Rinaldo.Colombo@UniBs.it} \\
  \and Francesca Marcellini \\ \small Dip.~di Matematica e Applicazioni \\
  \small Universit\`a di Milano -- Bicocca \\ \small Via Cozzi, 53 \\ \small 20125 Milano, Italy \\ \small
  \texttt{Francesca.Marcellini@UniMiB.it}}
\date{}
\begin{document}

\maketitle

\begin{abstract}
  \noindent We present a traffic flow model consisting of a gluing
  between the Lighthill--Whitham and Richards macroscopic model with a
  first order microscopic follow the leader model. The basic
  analytical properties of this model are investigated. Existence and
  uniqueness are proved, as well as the basic estimates on the
  dependence of solutions from the initial data. Moreover, numerical
  integrations show some qualitative features of the model, in
  particular the transfer of information among regions where the
  different models are used.

  \medskip
  \noindent\textit{2000~Mathematics Subject Classification:} 35L65,
  90B20 \medskip

  \noindent\textit{Key words and phrases:} Continuum Traffic Models,
  Hyperbolic Systems of Conservation Laws, Microscopic Traffic
  Models
\end{abstract}

\section{Introduction}
\label{sec:Intro}

We consider a traffic flow model consisting of a macroscopic and a
microscopic descriptions glued together. The macroscopic part is
described through the Lighthill--Whitam~\cite{LighthillWhitham} and
Richards~\cite{Richards} model (LWR)
\begin{equation}
  \label{eq:LWR}
  \partial_t \rho + \partial_x \left(\rho \, v (\rho)\right) =0,
\end{equation}
which is a scalar conservation law, where the unknown $\rho = \rho
(t,x)$ is the (mean) traffic density and $v = v (\rho)$ is the (mean)
traffic speed. Microscopic models for vehicular traffic consist of a
finite set of ordinary differential equations, describing the motion
of each vehicle in the traffic flow. Below, as in~\cite{ArgallEtAl},
we consider a first order Follow--the--Leader (FtL) model, where each
driver adjusts his/her velocity to the vehicle in front, that is
\begin{equation}
  \label{eq:FtL}
  \dot p_i = v\left(\frac{\ell}{p_{i+1} - p_i}\right).
\end{equation}
Here, $p_i = p_i (t)$ is the position of the $i$-th driver, for $i=1,
\ldots, n$, and $p_{i+1} - p_i \geq \ell$ for all $i=1, \ldots, n-1$,
the fixed parameter $\ell$ denoting the (mean) vehicles' length. Here,
$\ell / (p_{i+1} - p_i)$ is the local traffic density in front of the
driver $p_i$. Equation~\eqref{eq:FtL} needs to be closed with the
trajectory of the first driver $p_n$.

In general, the two descriptions~\eqref{eq:LWR} and~\eqref{eq:FtL} can
be alternatively used in different segments of the real line. The
resulting model, in general, consists of several instances
of~\eqref{eq:LWR} and~\eqref{eq:FtL} alternated along the real line,
separated by \emph{free boundaries}, whose evolution needs to be
determined. This description enjoys the basic properties
in~\cite{GaravelloPiccoli2009} that are there considered as necessary
for a reliable description of traffic dynamics. Indeed, density and
speed are \emph{a priori} bounded, speed is never negative and
vanishes only at the maximal density.

A similar approach to traffic modeling is in~\cite{LattanzioPiccoli},
where the interface between the micro- and macro description is kept
fixed and the model in~\cite{AwRascle, Zhang2002} plays the role here
played by the LWR one. See also~\cite{ColomboMarson} for the case
$n=1$.

\par From a macroscopic point of view, vehicular traffic can be viewed
as a compressible fluid flow, whereas a microscopic approach describes
the behavior of each individual vehicle. Macroscopic descriptions
allow to simulate traffic on large networks but do not take much
account of the details. On the other hand, microscopic descriptions
can cover such details, but they are not tractable on a large
network. None of the two approaches is separately able to capture the
information of traffic dynamics. A natural strategy is therefore to
combine macroscopic and microscopic models. The result is the present
Micro--Macro Model, consisting in the coupling of the two different
descriptions.

Numerical results complete the study of the model and show the
reasonableness of it's solutions: in particular they explain how the
two micro- and macroscopic descriptions coexist in a single model,
although being separated. Below, we prove a well posedness result
separately for the LWR-FtL case, when the LWR model describes the
traffic dynamics on the right and the FtL on the left, and for the
opposite case, the FtL-LWR one; we also provide precise estimates on
how the solution depends from the initial data.

The paper is organized as follows: in the next section we introduce
the notations and the general model, when the two descriptions are
alternatively used in different segments of the real line. Then, we
prove a well posedness result separately for the LWR-FtL case and the
FtL-LWR one. In Section~\ref{sec:A} we present some numerical results
related to the model. All proofs are gathered in the last section.

\section{Notation and Main Results}
\label{sec:RP}

Throughout, we denote $\reali^+ = \left[0, +\infty\right[$ and
$\pint{\reali}^+ = \left]0, +\infty\right[$. For any $n \in \naturali$
and $\ell \in \pint{\reali}^+$, the set of admissible positions of $n$
vehicles of length $\ell$ is
\begin{equation}
  \label{eq:Pnl}
  \mathcal{P}_\ell^n
  =
  \left\{
    p \in \reali^n \colon
    p_{i+1} - p_i \geq \ell \mbox{ for } i=1, \ldots, n-1
  \right\} \,.
\end{equation}

\noindent Throughout, we assume the following condition on the speed
law:
\begin{description}
\item[(v)] $v \in \C2 ([0,1]; \reali^+)$ is strictly decreasing, with
  $v (1)=0$ and is such that $\frac{d^{2}~}{d\rho^{2}} \left(\rho \, v
    (\rho)\right) < 0$.
\end{description}

\noindent Our aim is the well posedness of a system consisting of
various instances of the LWR model~\eqref{eq:LWR} and of the FtL
model~\eqref{eq:FtL}, alternated along the real line. To this aim,
introduce the number $N \in \naturali$, $N \geq 1$, of the intervals
where the FtL model is used. Call $n_j$, with $n_j \geq 2$ for $j=1,
\ldots, N$, the number of individuals in the $j$-th interval and
denote $\mathcal{I}_p (t)$ the set of those points in $\reali$ where
the macroscopic model is used, i.e.
\begin{displaymath}
  \mathcal{I}_{p (t)}
  =
  \left]-\infty, p_1^1 (t) \right[ \cup \bigcup_{j=1}^{N-1} \left]
    p_{n_j}^j (t), p_{1}^{j+1} (t) \right[ \cup \left]p_{n_N}^N (t),
    +\infty \right[ \,,
\end{displaymath}
see Figure~\ref{fig:3}. Consider the system
\begin{equation}
  \label{eq:3}
  \!\!\!
  \left\{
    \begin{array}{@{}l@{\qquad}l@{}}
      \partial_t \rho + \partial_x \left(\rho \, v (\rho)\right) =0
      &
      \displaystyle
      x \in \mathcal{I}_{p (t)}
      \\
      \dot p_i^j (t) = v\left(\frac{\ell}{p_{i+1}^j (t) - p_i^j (t)}\right)
      & i = 1, \ldots, n_j-1\,, \qquad j=1, \ldots, N
      \\
      \dot p_{n_j}^j = v\left(\rho\left(t, p_{n_j}^j (t)\right)\right)
      & j=1, \ldots, N
      \\
      \rho (0, x) = \bar \rho (x)
      &
      \displaystyle
      x \in \mathcal{I}_{\bar p}
      \\
      p^j (0) = \bar p^j
      & j=1, \ldots, N \,,
    \end{array}
  \right.
\end{equation}
Throughout, we require that the initial data satisfy the admissibility
condition
\begin{equation}
  \label{eq:AdmInDat}
  \begin{array}{rcll}
    \bar \rho & \in & (\L1 \cap \BV) (\reali; [0,1])
    & \mbox{ with } \bar\rho (x) = x
    \mbox{ whenever } x \in \reali \setminus \mathcal{I}_{\bar p} \,,
    \\
    \bar p^j & \in & \mathcal{P}_\ell^{n_j}
    & \mbox{ for all } \quad
    j=1, \ldots, N \,.
  \end{array}
\end{equation}
\begin{figure}[!h]
  \centering
  \begin{psfrags}
    \psfrag{r}{$\rho$} \psfrag{x}{$x$} \psfrag{t}{$t$}
    \psfrag{p11}{$p_1^1$} \psfrag{p12}{$p_{n_1}^1$}
    \psfrag{p21}{$p_1^2$} \psfrag{p22}{$p_{n_2}^2$}
    \includegraphics[width=0.7\textwidth]{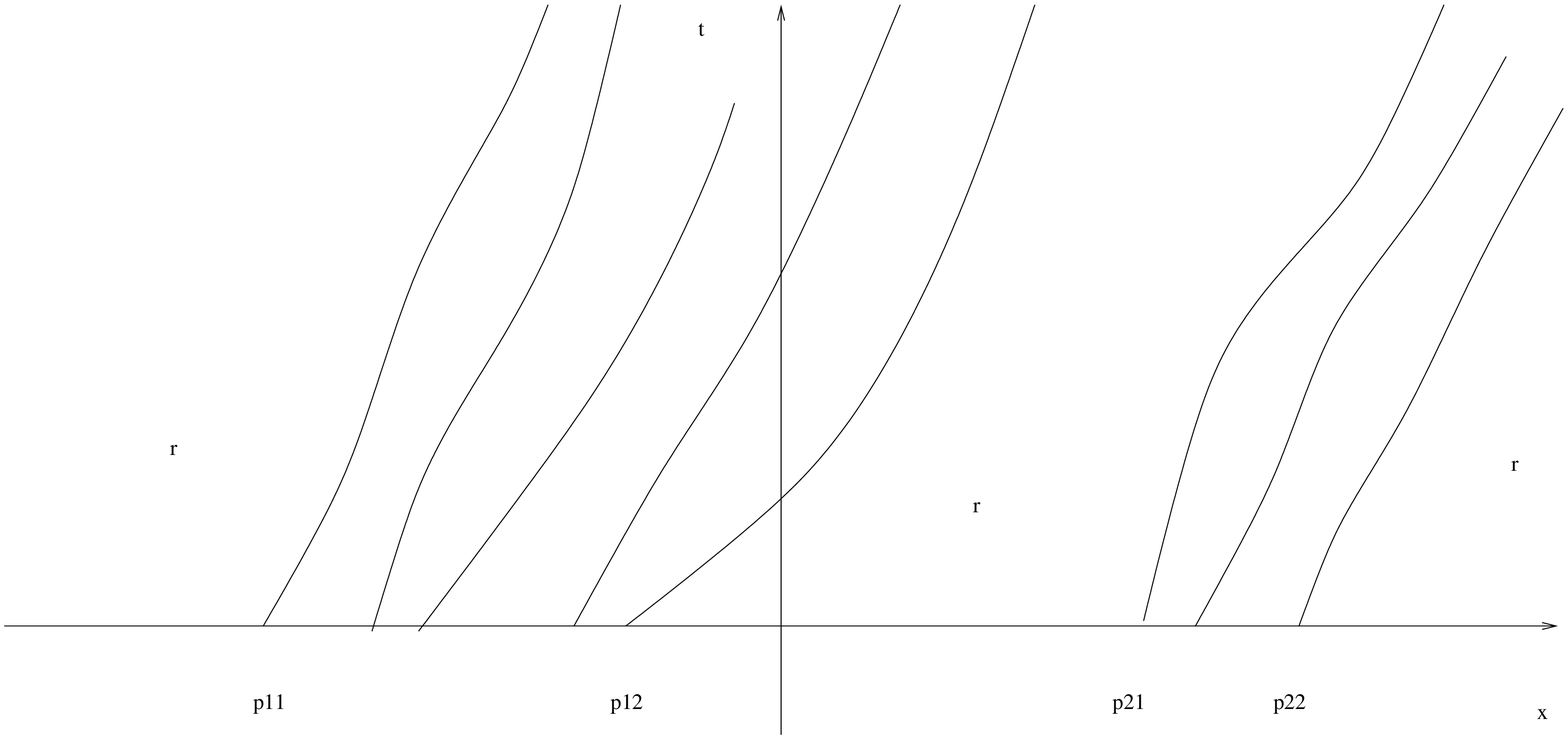}
  \end{psfrags}
  \Caption{Situation described by~\eqref{eq:3} in the case $N=2$,
    $n_1=5$ and $n_2=3$.}
  \label{fig:3}
\end{figure}

Note that problems similar to~\eqref{eq:3} can be stated equally with
the microscopic model in the rightmost and/or leftmost part of the
real line.

The first step in the rigorous treatment of~\eqref{eq:3} is the
definition of its solutions. Essentially, we require to solve the
ordinary differential equations in~\eqref{eq:3} as usual and to seek a
weak entropy (Kru\v zkov) solution to the hyperbolic conservation
law~\eqref{eq:LWR} in $\mathcal{I}_p (t)$, for $t \in \reali^+$. To
simplify the notation, we require $\rho (t, \cdot)$ to be defined on
all the real line and extend it to $0$ on $\reali \setminus
\mathcal{I}_p (t)$.

\begin{definition}
  \label{def:Sol}
  Fix positive $T$ and $\ell$, an initial distribution $\bar \rho \in
  (\L1 \cap \BV) (\reali; [0,1])$ and positions $\bar p_i^j$ for $i=1,
  \ldots, n_j$, $j=1, \ldots, N$ satisfying~\eqref{eq:AdmInDat}. A
  solution to~\eqref{eq:3} on the time interval $[0,T[$, consists of
  maps
  \begin{eqnarray*}
    \rho & \in & \C0\left([0,T]; (\L1\cap \BV) (\reali; [0,1])\right)
    \quad \mbox{ with } \rho (t,x) = 0
    \mbox{ whenever } x \in \reali \setminus \mathcal{I}_p (t)
    \\
    p^j & \in & \W{1,\infty} ([0,T]; \mathcal{P}_\ell^{n_j})
    \quad \mbox{ for } j=1, \ldots, N
  \end{eqnarray*}
  (where continuity is understood with respect to the $\L1$ topology)
  such that
  \begin{enumerate}
  \item \label{it:def:wes} for all $\phi \in \Cc1 (\left]0, T\right[
    \times \reali, \reali^+)$ with $\spt \phi \subset \left\{ (t,x)
      \in [0,T] \times \reali \colon \displaystyle x \in \mathcal{I}_p
      (t) \right\}$ the following inequality holds for all $k \in
    \reali$:
    \begin{displaymath}
      \int_0^T \int_{\reali}
      \left(
        \modulo{\rho (t,x) - k} \, \partial_t \phi (t,x)
        +
        \left(
          \rho (t,x) \, v\left(\rho (t,x)
            -
            k \, v (k)\right)
        \right)
        \partial_x \phi (t,x)
      \right)
      \d{x} \, \d{t}
      \geq 0 \,.
    \end{displaymath}
  \item \label{it:def:bd} For $j = 1, \ldots, N$ and for a.e.~$\tau
    \in \reali^+$, let $u^\tau$ be the solution to the Riemann Problem
    \begin{displaymath}
      \left\{
        \begin{array}{l}
          \partial_t u + \partial_x \left(u \, v (u)\right) =0
          \\
          u (t,x) = \left\{
            \begin{array}{l@{\quad\mbox{if }}rcl}
              \rho (\tau, p_1^j (\tau)-)
              & x & < & p_1^j (\tau) \,,
              \\
              \frac{\ell}{p_2^j (\tau) - p_1^j (\tau)}
              & x & > & p_1^j (\tau) \,.
            \end{array}
          \right.
        \end{array}
      \right.
    \end{displaymath}
    Then, $\rho (t, p_1^j (t)-) = u^\tau (t,x)$, for all $(t,x)$ such
    that $x < p_1^j (\tau) + \dot p_1^j (\tau) (t-\tau)$ and $t >
    \tau$;
  \item \label{def:it:ode} for a.e.~$t \in [0,T]$ and all $j=1,
    \ldots, N$, $i=1, \ldots, n_j-1$, $\dot p_j^i (t) =
    v\left(l\middle/\left(p_j^{i+1} (t) - p_j^i (t)\right)\right)$;

  \item \label{def:it:ode1} for a.e.~$t \in [0,T]$ and all $j=1,
    \ldots, N$, $\dot p_{n_j}^j (t) = v\left( \rho \left(t, p_{n_j}^j
        (t)+\right)\right)$.
  \end{enumerate}
\end{definition}

\noindent Above, the condition at~\ref{it:def:wes}.~is equivalent to
the usual definition of Kru\v zkov solution,
see~\cite[Formula~(6.3)]{BressanLectureNotes}. Thanks to the $\L1$
continuity in times, it also ensures the usual distributional
condition: for all $\phi \in \Cc1 (\left]-\infty, T\right] \times
\reali, \reali)$ with $\spt \phi \subset \left\{ (t,x) \in \reali^2
  \colon x \in \mathcal{I}_p (t) \mbox{ for all } t \in [0,T]
\right\}$,
\begin{displaymath}
  \int_0^T \int_{\reali}
  \left(
    \rho (t,x) \, \partial_t \phi (t,x)
    +
    \rho (t,x) \,  v\left(\rho (t,x)\right) \, \partial_x \phi (t,x)
  \right)
  \d{x} \d{t}
  +
  \int_\reali
  \bar\rho (x) \, \phi (0, x) \d{x}
  =0 \,.
\end{displaymath}

The requirement~\ref{it:def:bd}.~is the standard definition of
solution to a boundary value problem for a conservation law,
see~\cite[Definition~2.1]{DuboisLefloch},
\cite[Definition~C.1]{AmadoriColombo}
and~\cite[Definition~2.2]{ColomboGroli4}.  Remark that the
trajectories $p_1^j = p_1^j (t)$ and $p_{n_j}^j = p_{n_j}^j (t)$, for
$j = 1, \ldots, N$, are \emph{free boundaries} between micro- and
macroscopic descriptions, to be found while
solving~\eqref{eq:3}. However, only the $p_1^i$, for $i=1, \ldots, N$,
have a role in~\ref{it:def:bd}.

We remark that any solution to~\eqref{eq:3} in the sense of
Definition~\ref{def:Sol} enjoys the basic properties underlined
in~\cite{GaravelloPiccoli2009}, namely:
\begin{description}
\item[P1] Cars may have only positive speed.
\item[P2] Vehicles stop only at maximum density, i.e., the velocity
  $v$ is $0$ if and only if the density $\rho$ is equal to the maximum
  density possible.
\end{description}
\noindent The next two sections deal with the two possible gluing of
the a single instance of the LWR model and a single instance of the
FtL one.

\subsection{The Case LWR--FtL}
\label{par:LWR-FtL}

Let $n$ vehicles start at time $t = 0$ from positions $\bar p \in
\mathcal{P}_\ell^n$ and use the LWR model to describe the traffic
dynamics for $x < \bar p_1$. We are thus lead to consider the problem
\begin{equation}
  \label{eq:LWR-FtL}
  \left\{
    \begin{array}{@{}l@{\quad}rcll@{}}
      \partial_t \rho + \partial_x \left(\rho \, v (\rho)\right) =0
      & t & \in & \reali^+
      & \mbox{ and }x \, < \, p_1 (t)
      \\
      \dot p_i = v\left(\frac{\ell}{p_{i+1} - p_i}\right)
      & t & \in & \reali^+
      & \mbox{ and } i=1, \ldots, n-1
      \\
      \dot p_n = w (t)  & t & \in & \reali^+
      \\
      \rho (0,x) = \bar \rho (x) & x & \leq & \bar p_1
      \\
      p (0) = \bar p
    \end{array}
  \right.
\end{equation}
where $w \in \L\infty (\reali^+; \reali^+)$ is the speed of the
leader, $\bar \rho \in (\L1\cap\BV) (\reali; [0,1])$ describes the
vehicles' distribution for $x < \bar p_1$ and $\bar p \in
\mathcal{P}_\ell^n$. In the present case~\eqref{eq:LWR-FtL}, the
trajectory of $p_1$, i.e., $p_1 (t) + \bar p_1 + \int_0^t w (\tau)
\d\tau$, acts as a boundary between the microscopic model on its right
and the macroscopic one on its left.

Remark that from a strictly rigorous point of view,
problem~\eqref{eq:LWR-FtL} does not fit into~\eqref{eq:3}. However,
the extension of Definition~\ref{def:Sol} to the case
of~\eqref{eq:LWR-FtL} is straightforward and we omit it.

\begin{proposition}
  \label{prop:LWR-FtL}
  Fix $\ell >0$, $V > 0$, $n \in \naturali$ with $n \geq 2$ and a $v$
  that satisfies~\textbf{(v)}. Let $w$ be in $\L\infty (\reali^+; [0,
  V])$. For any $\bar p \in \mathcal{P}^n_\ell$ and for any $\bar\rho
  \in (\L1\cap \BV) (\reali; [0,1])$, problem~\eqref{eq:LWR-FtL}
  admits a unique solution in the sense of
  Definition~\ref{def:Sol}. Moreover, there exists a positive $L$ such
  that if $w' \in \L\infty (\reali^+; [0, V])$, $\bar p' \in
  \mathcal{P}^n_\ell$ and $\bar\rho' \in (\L1\cap \BV) (\reali;
  [0,1])$, then the corresponding solutions $(p, \rho)$ and $(p',
  \rho')$ satisfy for all $t\geq0$ the following estimates:
  \begin{eqnarray*}
    \norma{\rho (t,\cdot) - \rho' (t,\cdot)}_{\L1}
    & \leq &
    L \norma{\bar \rho - \bar \rho'}_{\L1}
    \\
    & + &
    L
    \left( 1+\left( 1+2V\right) \frac{2}{\ell}t\right)
    \left(
      \norma{ \bar p - \bar p'}
      +
      \norma{w-w'}_{\L1 ([0,t])}
    \right)
    \exp \left( 2\frac{\Lip(v)}{\ell}t\right)
    \\
    \norma{p (t) - p' (t)}
    & \leq &
    \left(
      \norma{\bar p - \bar p'}
      +
      \norma{w - w'}_{\L1 ([0,t])}
    \right)
    \exp \left( 2 \, \frac{\Lip(v)}{\ell} \, t \right) \,.
  \end{eqnarray*}
\end{proposition}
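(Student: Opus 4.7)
The key structural observation is that the ordinary differential equations in \eqref{eq:LWR-FtL} decouple from the PDE: since $\dot p_n = w(t)$ is externally prescribed and each $\dot p_i$ for $i < n$ depends only on $p_i$ and $p_{i+1}$, the vector $p(t)$ can be computed independently of $\rho$, and only then does $p_1(t)$ appear as a (Lipschitz) moving boundary for the LWR equation. My plan therefore splits into an ODE stage, a PDE stage, and two separate stability estimates.

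\textbf{Step 1 (ODE existence).} Set $p_n(t) = \bar p_n + \int_0^t w(\tau)\d\tau$, then solve successively for $p_{n-1},\dots,p_1$ the ODEs $\dot p_i = v\!\left(\ell/(p_{i+1}-p_i)\right)$ with initial data $\bar p_i$. On $\{p_{i+1}-p_i \geq \ell\}$ the right-hand side is Lipschitz in $p_i$ thanks to hypothesis~\textbf{(v)}. Admissibility is preserved globally because at the boundary $p_{i+1}-p_i=\ell$ we have $\dot p_i = v(1)=0 \le \dot p_{i+1}$. This yields a unique $p \in \W{1,\infty}(\reali^+;\mathcal P^n_\ell)$ with $\norma{\dot p_i}_{\L\infty} \leq V$.

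\textbf{Step 2 (PDE existence).} Given the Lipschitz moving boundary $p_1$, perform the change of variables $y=x-p_1(t)$, which transforms the problem into an initial-boundary value problem on the fixed half-line $\{y<0\}$ with flux $f(u)=\rho v(\rho)-\dot p_1(t)\,u$ and Dirichlet datum $\ell/(p_2-p_1)$ at $y=0$. Existence and uniqueness of a Kru\v zkov entropy solution with the BLN-type boundary condition of Definition~\ref{def:Sol}.\ref{it:def:bd} is then classical; one invokes for example the theory in~\cite{DuboisLefloch,AmadoriColombo}. Reverting the change of variables and extending by $0$ on $\{x>p_1(t)\}$ produces the required $\rho \in \C0([0,T]; (\L1\cap\BV)(\reali;[0,1]))$.

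\textbf{Step 3 (ODE stability).} For two data $(\bar p, w)$, $(\bar p', w')$, one has
\[
\modulo{p_n(t)-p_n'(t)} \leq \modulo{\bar p_n-\bar p_n'} + \norma{w-w'}_{\L1([0,t])},
\]
while for $i<n$, the Lipschitz bound on $v$ together with $p_{i+1}-p_i \geq \ell$ gives $\modulo{\dot p_i-\dot p_i'} \leq (\Lip(v)/\ell)\bigl(\modulo{p_{i+1}-p_{i+1}'}+\modulo{p_i-p_i'}\bigr)$. A standard Gronwall argument on the sum $\sum_i \modulo{p_i-p_i'}$ yields the second estimate with exponential rate $2\Lip(v)/\ell$.

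\textbf{Step 4 (PDE stability).} This is the main obstacle. Once the boundary is straightened as in Step~2, I would apply the standard $\L1$ stability theory for IBVPs with $\BV$ flux and boundary data: the difference $\norma{\rho(t,\cdot)-\rho'(t,\cdot)}_{\L1}$ is controlled by (i) $\norma{\bar\rho-\bar\rho'}_{\L1}$, (ii) the $\L1$-in-time discrepancy between the two fluxes, which is bounded by $V\,\norma{p_1-p_1'}_{\L\infty}\cdot(\tv\,\rho + \tv\,\rho')$ uniformly in $t$, and (iii) the $\L1$-in-time discrepancy of boundary traces $\bigl\|\ell/(p_2-p_1)-\ell/(p_2'-p_1')\bigr\|_{\L1([0,t])}$, which by the mean value theorem is bounded by $(2/\ell)(\norma{p_1-p_1'}_{\L1}+\norma{p_2-p_2'}_{\L1})$. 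The $\BV$ bounds on $\rho,\rho'$ grow linearly in $t$ with coefficient depending on $V$ and $\ell$, which is what produces the factor $(1+(1+2V)(2/\ell)t)$ in the stated inequality. Plugging in the exponential bound from Step~3 for $\norma{p-p'}$ closes the estimate. The delicate points are the precise tracking of the $\BV$ norm of $\rho$ under the moving boundary and the careful bookkeeping of the constants: as long as this is done via a well-chosen reference solution (Kru\v zkov doubling of variables applied to the straightened problem), the result follows.
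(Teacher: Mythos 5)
Your overall architecture coincides with the paper's: decouple the ODE system (your Steps 1 and 3 are essentially the paper's Lemma~\ref{lem:ode}, including the Gronwall rate $2\Lip(v)/\ell$), then regard $\rho$ as the solution of an initial--boundary value problem with moving boundary $p_1(t)$ and boundary datum $\ell/(p_2-p_1)$, and finally combine the two stability estimates. The existence part is acceptable, although the paper obtains the IBVP directly from the Temple-class theory behind Lemma~\ref{lem:IBVP} rather than by straightening the boundary.

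The genuine gap is your Step 4, which is exactly where the paper needs a non-trivial external input, namely the estimate~\eqref{eq:estimate2} of Lemma~\ref{lem:IBVP}: $\L1$-Lipschitz dependence of the IBVP solution on the initial datum, on the boundary curve measured in $\C0([0,t])$, and on the boundary datum measured in $\L1([0,t])$, with a constant $L$ uniform in time. You assert this can be recovered by Kru\v zkov doubling after straightening, but the mechanism you sketch is not correct. After the change of variables the two problems differ through the flux perturbation $\left(\dot p_1'(t)-\dot p_1(t)\right)u$, so the standard stability-in-flux estimate controls $\norma{\rho(t)-\rho'(t)}_{\L1}$ through $\norma{\dot p_1-\dot p_1'}_{\L1([0,t])}$ (times TV-type constants), not through $V\,\norma{p_1-p_1'}_{\L\infty}$ as claimed in your item (ii); $\norma{\dot p_1-\dot p_1'}_{\L1([0,t])}$ is not bounded by $\norma{p_1-p_1'}_{\C0([0,t])}$ in general, and to convert it you must return to the ODE, using $\modulo{\dot p_1-\dot p_1'}\le \frac{\Lip(v)}{\ell}\left(\modulo{p_1-p_1'}+\modulo{p_2-p_2'}\right)$, which is a different bookkeeping from the one you describe. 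Likewise, the factor $\left(1+(1+2V)\frac{2}{\ell}t\right)$ does not come from linear growth of the TV of $\rho$: in the paper it arises simply because the boundary-datum discrepancy is integrated in time, $\norma{\ell/(p_2-p_1)-\ell/(p_2'-p_1')}_{\L1([0,t])}\le \frac{2}{\ell}\,t\,\norma{p-p'}_{\C0([0,t])}$, while $(1+2V)$ is the constant attached to the boundary-datum term in~\eqref{eq:estimate2}. As written, Step 4 neither proves nor correctly reduces to the needed IBVP stability estimate; you should either invoke a result of the type of Lemma~\ref{lem:IBVP} (the paper uses the Temple-system IBVP theory of Colombo--Groli, which yields the time-uniform $L$ and the $\C0$ dependence on the boundary curve) or genuinely carry out the doubling/straightening argument with the flux term handled through $\dot p_1-\dot p_1'$.
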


The proof is postponed to Section~\eqref{sec:TD}.

\subsection{The Case FtL--LWR}
\label{par:FtL-LWR}

Next we use the FtL model to describe $n$ vehicles starting at time $t
= 0$ from positions $\bar p \in\mathcal{P}_\ell^n$ and the LWR model
for $x > p_n (t)$. The free boundary between the two models is the
trajectory $p_n = p_n (t)$, chosen so that $\dot p_{n} = v \left(\rho
  (t, p_n (t))\right)$. We are thus lead to consider the problem

\begin{equation}
  \label{eq:FtL-LWR}
  \left\{
    \begin{array}{@{}l@{\quad}rcll@{}}
      \partial_t \rho + \partial_x \left(\rho \, v (\rho)\right) =0
      & t & \in & \reali^+
      & \mbox{ and }x \, > \, p_n (t)
      \\
      \dot p_i = v\left(\frac{\ell}{p_{i+1} - p_i}\right)
      & t & \in & \reali^+
      & \mbox{ and } i=1, \ldots, n-1
      \\
      \dot p_n = v \left(\rho(t, p_n (t))\right)
      & t & \in & \reali^+
      \\
      \rho (0,x) = \bar \rho (x) & x & \geq & \bar p_n
      \\
      p (0) = \bar p
    \end{array}
  \right.
\end{equation}
where $\bar \rho \in (\L1\cap\BV) (\reali; [0,1])$ describes the
macroscopic vehicles' distribution for $x > \bar p_n$ and $\bar p \in
\mathcal{P}_\ell^n$ gives the initial positions of the discrete
vehicles. In the present case~\eqref{eq:FtL-LWR}, the trajectory of
$p_n$ acts as a boundary between the microscopic model on its left and
the macroscopic one on its right.  As in the preceding section, from a
strictly rigorous point of view, problem~\eqref{eq:FtL-LWR} does not
fit into~\eqref{eq:3} but the extension of Definition~\ref{def:Sol}
to~\eqref{eq:FtL-LWR} is straightforward.

\begin{proposition}
  \label{prop:FtL-LWR}
  Fix $\ell >0$, $V > 0$, $n \in \naturali$ with $n \geq 2$ and a $v$
  that satisfies~\textbf{(v)}. For any $\bar p \in \mathcal{P}^n_\ell$
  and for any $\bar\rho \in (\L1\cap \BV) (\reali; [0,1])$,
  problem~\eqref{eq:FtL-LWR} admits a unique solution in the sense of
  Definition~\ref{def:Sol}. Moreover, there exists a positive $L$ such
  that if $v'$ satisfies~\textbf{(v)}, $\bar p' \in
  \mathcal{P}^n_\ell$ and $\bar\rho' \in (\L1\cap \BV) (\reali;
  [0,1])$, then
  \begin{equation}
    \label{eq:prima}
    \norma{\rho(t,\cdot) - \rho'(t,\cdot)}_{\L1}
    \leq
    \norma{\bar \rho - \bar \rho'}_{\L1}
    +
    \norma{\bar p - \bar p'}
  \end{equation}
  Moreover, if $\bar\rho = \bar\rho'$, there exists a non decreasing
  function $C \colon \reali^+ \to \reali^+$ such that
  \begin{equation}
    \label{eq:seconda}
    \modulo{p_{n} (t) - p'_{n} (t)}
    \leq
    \left(
      \norma{\bar p - \bar p'} + C (t) \, \norma{\bar p - \bar p'}^\alpha
    \right)
    \, \exp\left(2 \frac{\Lip (x)}{\ell} \, t\right)
  \end{equation}
  where $\alpha = \left(1 + \max _{[0,R]}\frac{v (\rho) - v (0)}{\rho
      \, v' (\rho)}\right)^{-1}$.
\end{proposition}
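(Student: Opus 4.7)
The plan is to construct the solution via a fixed-point argument on the trajectory of the leader $p_n$, which serves as a moving free boundary for the LWR equation. Work on the complete metric space $\mathcal{X}_T$ of Lipschitz functions $q\colon[0,T]\to\reali$ with $q(0)=\bar p_n$ and $\dot q\in[0,V]$ where $V=v(0)$, equipped with the uniform norm. Given $q\in\mathcal{X}_T$, solve the Kruzhkov problem for the LWR equation on $\{x>q(t)\}$ with initial datum $\bar\rho$ restricted to $\{x>\bar p_n\}$, a standard initial-boundary value problem whose well-posedness and $\L1$ continuous dependence on $q$ are classical. This yields $\rho[q]\in\C0([0,T]; (\L1\cap\BV)(\reali;[0,1]))$ (extended by $0$ for $x<q(t)$). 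Define $\mathcal{T}\colon\mathcal{X}_T\to\mathcal{X}_T$ by
\[ (\mathcal{T}q)(t) = \bar p_n + \int_0^t v\bigl(\rho[q](s, q(s)+)\bigr)\d{s}. \]
A fixed point of $\mathcal{T}$ gives $p_n$; the remaining trajectories $p_{n-1},\ldots,p_1$ are then obtained by sequential backward integration of the FtL cascade $\dot p_i = v(\ell/(p_{i+1}-p_i))$, whose vector field is Lipschitz on $\mathcal{P}^n_\ell$ with constant $\Lip(v)/\ell$; the spacing constraint $p_{i+1}-p_i\geq\ell$ is preserved because $p_{i+1}=p_i+\ell$ forces $\dot p_i = v(1) = 0 \leq \dot p_{i+1}$. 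For $T$ small enough $\mathcal{T}$ is a contraction in the sup norm, the key input being that the boundary trace $s\mapsto\rho[q](s,q(s)+)$ depends Lipschitz-continuously on $q$ thanks to the BV regularity of $\bar\rho$ and the $\L1$ stability of Kruzhkov solutions under shifts of the boundary. Global existence follows by iteration, using the uniform $\L\infty$ and $\BV$ bounds on $\rho$.

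For \eqref{eq:prima}, the crucial point is that $\dot p_n = v(\rho(t,p_n(t)+))$ makes the flux $\rho v(\rho) - \rho\,\dot p_n$ vanish at the free boundary, so no mass crosses $x=p_n(t)$. Applying the Kruzhkov doubling-of-variables argument on the common domain $\{x>\max(p_n(t),p_n'(t))\}$ produces the $\norma{\bar\rho-\bar\rho'}_{\L1}$ term, while the symmetric-difference interval $[\min(p_n,p_n'),\max(p_n,p_n')]$, on which one solution vanishes and the other is at most $1$ by the $\L\infty$ bound, contributes at most $\modulo{p_n(t)-p_n'(t)}\leq\norma{\bar p-\bar p'}$. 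Uniqueness of the solution is then a direct consequence of this $\L1$ contraction applied to two solutions with the same initial data.

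The main obstacle is \eqref{eq:seconda}, in particular the appearance of the Hölder exponent $\alpha$. With $\bar\rho=\bar\rho'$ the ODE $\dot p_n-\dot p_n' = v(\rho(t,p_n(t)+)) - v(\rho'(t,p_n'(t)+))$ reduces the task to estimating the oscillation of the right trace of $\rho$ across an $x$-interval of width $\modulo{p_n(t)-p_n'(t)}$. The only available input, beyond $\norma{\rho}_{\L\infty}\leq 1$ and the zero-flux condition at the boundary, is the Oleinik-type one-sided Lipschitz bound on $\rho$ supplied by the strict concavity of $\rho\mapsto\rho\,v(\rho)$ in hypothesis~\textbf{(v)}. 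Translating this bound into an $x$-modulus of continuity for the right trace $x\mapsto\rho(t,x+)$ yields precisely the Hölder exponent $\alpha=(1+\max_{[0,R]}(v(\rho)-v(0))/(\rho v'(\rho)))^{-1}$ of the statement, which quantifies how fast the trace can drop near the boundary and reflects the flux nonlinearity. Inserting this Hölder estimate into the integral inequality for $\modulo{p_n-p_n'}$, then running a Gronwall argument that also absorbs the Lipschitz constant $\Lip(v)/\ell$ of the FtL cascade (thereby producing the factor $\exp(2\Lip(v)\,t/\ell)$), gives \eqref{eq:seconda}.
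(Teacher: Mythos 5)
Your construction hinges on the contraction property of the map $\mathcal{T}$, and the ``key input'' you invoke --- that the boundary trace $s \mapsto \rho[q]\left(s, q(s)+\right)$ depends Lipschitz continuously on $q$ --- is unproved and in general false: when the candidate boundary runs near a discontinuity of $\rho[q]$, shifting $q$ by $\epsilon$ can change the trace by a quantity of order one, so no sup-norm Lipschitz dependence is available; the true dependence of trajectories of $\dot p = v\left(\rho(t,p)\right)$ on their data is only H\"older, which is precisely why~\eqref{eq:seconda} carries the exponent $\alpha<1$. (There is also an ambiguity in your setup: on $\left\{x>q(t)\right\}$ you would have to say in which sense a boundary condition at $x=q(t)$ is imposed, whereas in the correct formulation no datum is prescribed there --- condition~\ref{it:def:bd} of Definition~\ref{def:Sol} is empty in this case.) The paper avoids the fixed point altogether: it extends the initial datum by the vacuum value $0$ for $x<\bar p_n$, solves the Cauchy problem on all of $\reali$ by Kru\v zkov's theory, defines $p_n$ through the discontinuous ODE $\dot p_n = v\left(\rho(t,p_n)\right)$ via the quoted Colombo--Marson theorem (which is exactly where existence, uniqueness and the H\"older exponent $\alpha$ come from), and then restricts to $x>p_n(t)$; since a jump from $0$ to $\rho_r$ travels exactly at speed $v(\rho_r)$, the trajectory $p_n$ coincides with the vacuum/traffic interface and the restriction is automatically a solution.

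The same issue breaks your proof of~\eqref{eq:prima}: you bound the symmetric-difference strip by $\modulo{p_n(t)-p_n'(t)}$ and then assert $\modulo{p_n(t)-p_n'(t)}\leq\norma{\bar p-\bar p'}$, which is unjustified --- the two leaders move in different density fields and may separate linearly in time, and even with identical fields only the H\"older bound~\eqref{eq:seconda} holds, not a Lipschitz bound with constant $1$. The paper obtains~\eqref{eq:prima} directly from the global $\L1$ contraction of the two whole-line Cauchy problems, whose initial data differ in $\L1$ by at most $\norma{\bar\rho-\bar\rho'}_{\L1}+\modulo{\bar p_n-\bar p_n'}$, so no control of $p_n(t)-p_n'(t)$ is ever needed; your uniqueness claim likewise needs uniqueness for the discontinuous ODE defining $p_n$, which is not standard and is supplied by the cited result. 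Finally, for~\eqref{eq:seconda} you assert that an Oleinik-type one-sided estimate ``yields precisely'' the exponent $\alpha$; that derivation is the substantive content of the Colombo--Marson theorem the paper quotes and cannot simply be read off from the Oleinik bound, so as written this step is an assertion rather than a proof. Your treatment of the FtL cascade (preservation of the spacing $p_{i+1}-p_i\geq\ell$ and the Gronwall argument producing $\exp\left(2\Lip(v)t/\ell\right)$) does match Lemma~\ref{lem:ode}.
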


The proof is postponed to Section~\ref{sec:TD}.

\subsection{The General Case}
\label{par:general}

Applying iteratively Proposition~\ref{prop:LWR-FtL} and
Proposition~\ref{prop:FtL-LWR}, one obtains a general result for the
model in~\eqref{eq:3}, thanks to the finite propagation speed
in~\eqref{eq:3}.

Clearly, in the general model~\eqref{eq:3}, the number $n_j$ of
drivers in the interval $[p_i^j (t), p_{i+1}^j (t)]$ is fixed \emph{a
  priori}. An analogous property is enjoyed by the macroscopic
density, as proved by the following result.

\begin{proposition}
  \label{prop:F}
  Fix $N \in \naturali$; $n_1, \ldots, n_N$ with $n_j \geq 2$ for all
  $j$ and the initial data $\bar \rho$ and $\bar p$
  satisfying~\eqref{eq:AdmInDat}, the solution $(p,\rho)$
  to~\eqref{eq:3} satisfies:
  \begin{displaymath}
    \int_{p_{n_j}^j (t)}^{p_1^{n_{j+1}} (t)} \rho (t, x) \d{x}
    =
    \int_{\bar p_{n_j}^j}^{\bar p_1^{j+1}} \bar \rho (x) \d{x}
  \end{displaymath}
  for all $t \in \reali^+$ and for all $j=1, \ldots, N-1$.
\end{proposition}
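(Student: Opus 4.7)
The plan is to prove that the quantity
\begin{displaymath}
  I_j(t) = \int_{p_{n_j}^j(t)}^{p_1^{j+1}(t)} \rho(t,x) \d{x}
\end{displaymath}
is constant in $t$; since the claimed identity is exactly $I_j(t)=I_j(0)$, it suffices to show $I_j'(t)=0$ for a.e.~$t$. First I would differentiate $I_j$: using the Lipschitz regularity of the free boundaries $p_{n_j}^j$ and $p_1^{j+1}$ (item~\ref{def:it:ode} of Definition~\ref{def:Sol}), the $\L1$-in-time continuity of $\rho$, and Kru\v{z}kov's equation $\partial_t\rho+\partial_x(\rho\,v(\rho))=0$ in the open set $\mathcal{I}_{p(t)}$, the Leibniz rule combined with the PDE yields, for a.e.~$t$,
\begin{displaymath}
  I_j'(t) =
  \rho(t,p_1^{j+1}(t)-)\bigl[\dot p_1^{j+1}(t)-v(\rho(t,p_1^{j+1}(t)-))\bigr]
  - \rho(t,p_{n_j}^j(t)+)\bigl[\dot p_{n_j}^j(t)-v(\rho(t,p_{n_j}^j(t)+))\bigr],
\end{displaymath}
namely the net flux of $\rho$ across the two free boundaries, measured relative to the boundaries' own speeds.

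The second bracket vanishes immediately by item~\ref{def:it:ode1} of Definition~\ref{def:Sol}, which fixes $\dot p_{n_j}^j=v(\rho(\cdot,p_{n_j}^j+))$ for the trailing car of the $j$-th cluster. The whole substance of the proof therefore lies in the analogous cancellation at the right boundary, and this is the step I expect to be the main difficulty. Setting $\rho^{*}=\rho(t,p_1^{j+1}(t)-)$ and $\rho_R=\ell/(p_2^{j+1}(t)-p_1^{j+1}(t))$, the FtL equation in~\ref{def:it:ode} gives $\dot p_1^{j+1}(t)=v(\rho_R)$, so it suffices to show $\rho^{*}\bigl(v(\rho^{*})-v(\rho_R)\bigr)=0$, i.e.~either $\rho^{*}=0$ or $\rho^{*}=\rho_R$.

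To this end I would invoke the boundary condition~\ref{it:def:bd}, which requires the self-similar Riemann solution $u^\tau$ with left state $\rho^{*}$ and right state $\rho_R$ to be constant, equal to $\rho^{*}$, on the entire sector of rays whose speed is strictly below $\dot p_1^{j+1}(\tau)=v(\rho_R)$. Hypothesis~\textbf{(v)} ensures that $f(\rho)=\rho\,v(\rho)$ is strictly concave, so the Riemann fan is either a single shock (if $\rho^{*}<\rho_R$) or a single rarefaction (if $\rho^{*}>\rho_R$). In the shock case, a direct computation gives
\begin{displaymath}
  \sigma - v(\rho_R)
  = \frac{\rho^{*}\bigl(v(\rho_R)-v(\rho^{*})\bigr)}{\rho_R-\rho^{*}} \le 0,
\end{displaymath}
with equality only when $\rho^{*}=0$; the admissibility requirement $\sigma\ge v(\rho_R)$ coming from the boundary condition therefore forces $\rho^{*}=0$. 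In the rarefaction case, the leftmost characteristic speed $f'(\rho^{*})$ is bounded above by $f'(\rho_R)=v(\rho_R)+\rho_R v'(\rho_R)<v(\rho_R)$, which is incompatible with the boundary condition; only $\rho^{*}=\rho_R$ survives.

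Once $I_j'\equiv 0$ is established, the absolute continuity of $I_j$ (inherited from the $\L1$-time continuity of $\rho$ and the Lipschitz regularity of the endpoints) yields $I_j(t)=I_j(0)$, as claimed. The genuinely delicate step is the Riemann-problem case analysis at the right boundary, which crucially exploits the strict concavity of $f$ built into~\textbf{(v)}; the Leibniz differentiation and the vanishing at the left boundary are essentially bookkeeping, resting on the regularity supplied by the existence theory of Propositions~\ref{prop:LWR-FtL} and~\ref{prop:FtL-LWR}.
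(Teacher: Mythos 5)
Your proof is correct and follows essentially the same route as the paper: an application of the conservation law in integral (divergence) form on the region between the two free boundaries, with the relative flux through $p_{n_j}^j$ vanishing because $\dot p_{n_j}^j = v\left(\rho(t,p_{n_j}^j+)\right)$ and the relative flux through $p_1^{j+1}$ vanishing because of the FtL law together with the boundary condition. If anything, your Riemann-problem case analysis at $p_1^{j+1}$ is more careful than the paper's, which simply identifies the left trace with $\ell/\left(p_2^{j+1}-p_1^{j+1}\right)$ and thus glosses over the possible vacuum trace $\rho^{*}=0$, a case your argument covers and which still yields zero flux.
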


\noindent In other words, the total amount of vehicles in each segment
$[p_{n_j}^j (t), p_1^{j+1} (t)]$ is constant.

The proof is postponed to Section~\eqref{sec:TD}.

\section{Numerical Integrations}
\label{sec:A}

To numerically integrate the models~(\ref{eq:LWR-FtL})
and~(\ref{eq:FtL-LWR}) we use the Lax-Friedrichs algorithm,
see~\cite[Section~12.1]{LeVeque}, for the partial differential
equation and the explicit forward Euler method for the ordinary
differential equation.

In the case~(\ref{eq:LWR-FtL}), we choose
\begin{equation}
  \label{eq:1}
  v (\rho) = 1-\rho \,,
  \qquad
  \ell = 0.49
  \qquad \mbox{ and } \qquad
  w (t) = 0.75
\end{equation}
with initial datum
\begin{equation}
  \label{eq:LWR-Ftl:id}
  \begin{array}{rcl}
    \bar \rho(x)
    & = &
    \caratt{[-2,-0.5]}(x)+0.8\caratt{[-7,-5]}(x)+0.6\caratt{[-9,-7]}(x)
    \\
    \bar p
    & = &
    \left[0,\, 2,\, 4,\, 6.5,\, 7,\, 7.5,\, 8,\, 8.5,\, 9,\, 9.5 \right]
    \,.
  \end{array}
\end{equation}
Note that the above choices are consistent with the assumptions
required in Proposition~\ref{prop:LWR-FtL}.
\begin{figure}[!h]
  \centering
  \includegraphics[width=0.7\textwidth]{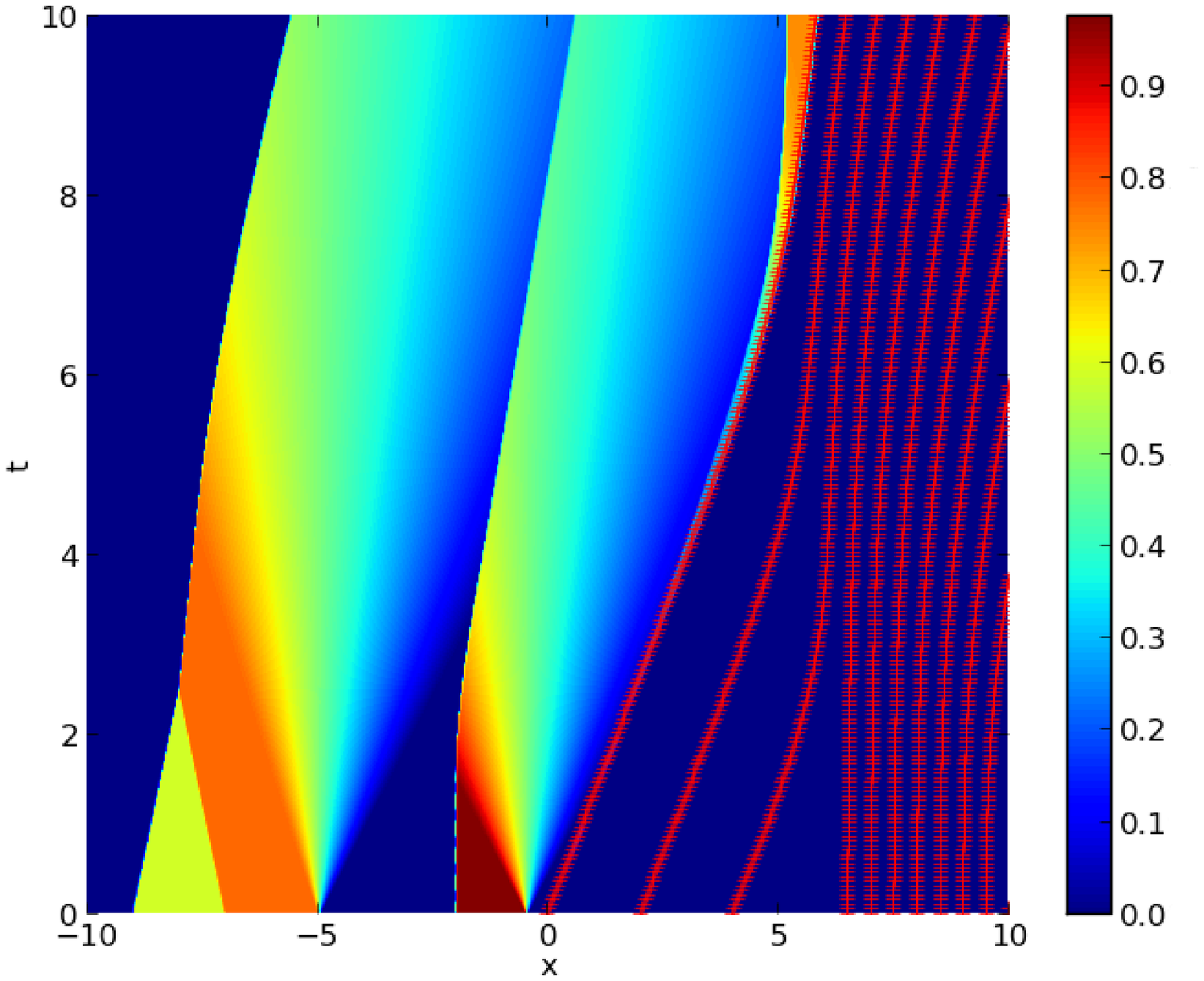}
  \Caption{Numerical integration of the LWR--FtL
    model~(\ref{eq:LWR-FtL})--(\ref{eq:1})--(\ref{eq:LWR-Ftl:id}). The
    interplay between the micro- and macroscopic phases is shown by
    the shock arising at about $t=4$, fully visible from about $t=8$.
    \label{fig:LWR-FtL}}
\end{figure}
The resulting solution is displayed in the $(t,x)$ plane in
Figure~\ref{fig:LWR-FtL}. It was computed with a space mesh size
$\Delta x = 2.5\times 10^{-3}$ and a time mesh size updated at each
time step so that
\begin{equation}
  \label{eq:mesh}
  \Delta t = 0.9 \cdot \Delta x / \Lambda
\end{equation}
$\Lambda$ being the maximal characteristic speed.

On the left, we see the typical behavior of the solutions to the LWR
model, consisting of shocks and rarefaction waves. On the right, the
microscopic part yields the trajectories of the single vehicles. Due
to the choice~(\ref{eq:LWR-Ftl:id}) of the initial datum, the cars in
front start very slowly, while the ones in the back have a higher
initial speed.  After a while these latter vehicles have to brake,
according to~(\ref{eq:FtL}). This causes the formation of a shock in
the macroscopic phase. Indeed, at about $t=4$, behind the leftmost
driver, a shock starts forming and becomes visible at about $t=8$.

\begin{figure}[!h]
  \centering
  \includegraphics[width=0.7\textwidth]{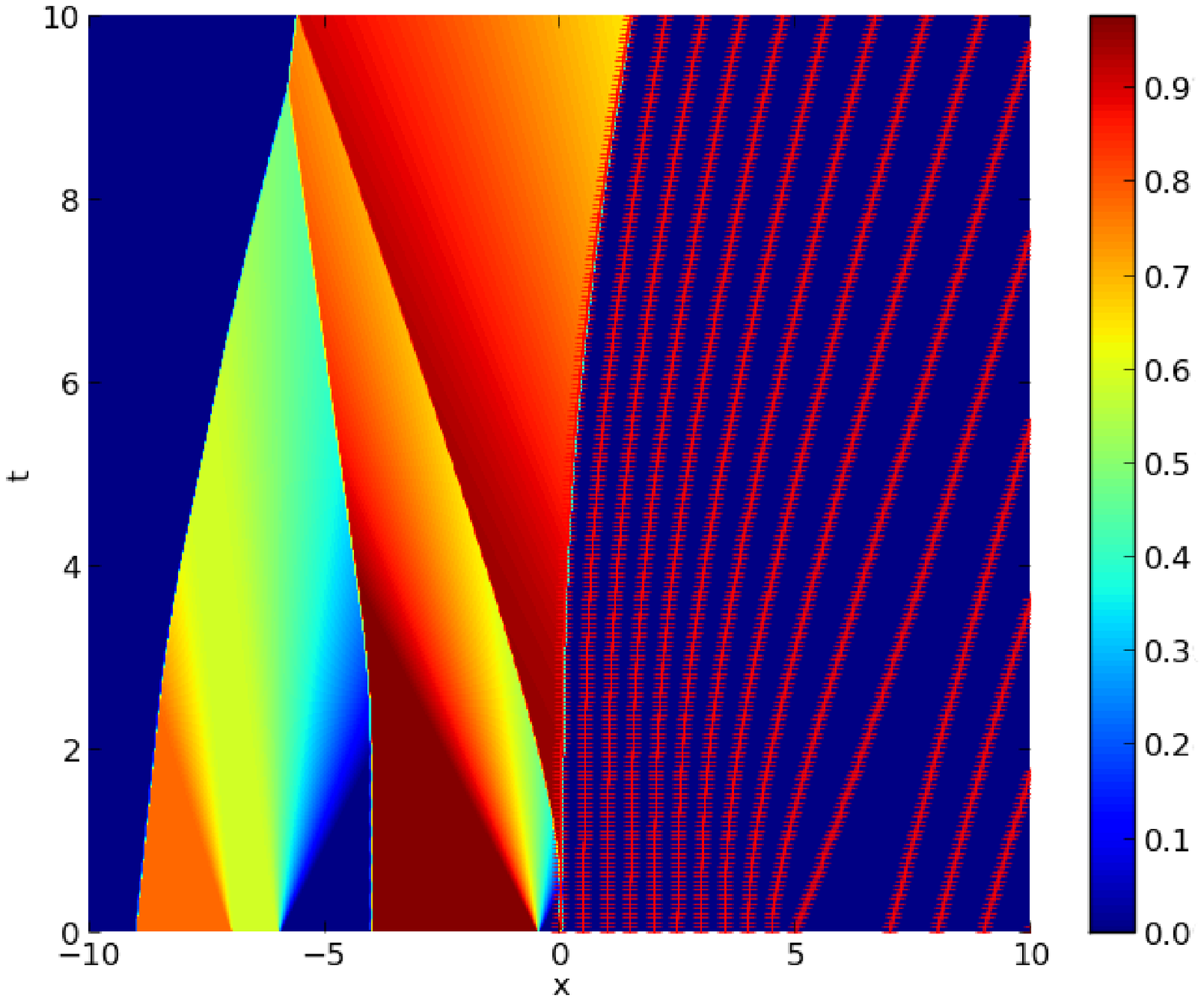}
  \Caption{Numerical integration of the LWR--FtL
    model~(\ref{eq:LWR-FtL})--(\ref{eq:1})--(\ref{eq:LWR-Ftl:id:2}). Here,
    we used the same space and time meshes as in the integration
    leading to Figure~\ref{fig:LWR-FtL}.
    \label{fig:LWR-FtL:2}}
\end{figure}
The same setting~(\ref{eq:LWR-FtL})--(\ref{eq:1}), but with initial
datum
\begin{equation}
  \label{eq:LWR-Ftl:id:2}
  \begin{array}{rcl}
    \bar \rho(x)
    & = &
    \caratt{[-4,-0.5]}(x)+0.6\caratt{[-6, -7]}(x)+0.8\caratt{[-9,-7]}(x)
    \\
    \bar p
    & = &
    \left[0,\, 0.5,\, 1.,\, 1.5,\, 2.,\,,\, 2.5,\, 3.,\, 3.5,\, 4.,\, 4.5,\, 5.,\, 7.,\, 8.,\, 9.,\, 10.\right]
    \,
  \end{array}
\end{equation}
leads to the picture in Figure~\ref{fig:LWR-FtL:2}. Here, the leftmost
drivers in the microscopic phase have a very low initial speed. Hence,
the rightmost vehicles in the macroscopic phase have to brake at about
$t=0.5$, forming a queue.
Later, the drivers in the microscopic phase accelerate and this
increase in the speeds reaches also the macroscopic phase.

\medskip

\begin{figure}[!h]
  \centering
  \includegraphics[width=0.7\textwidth]{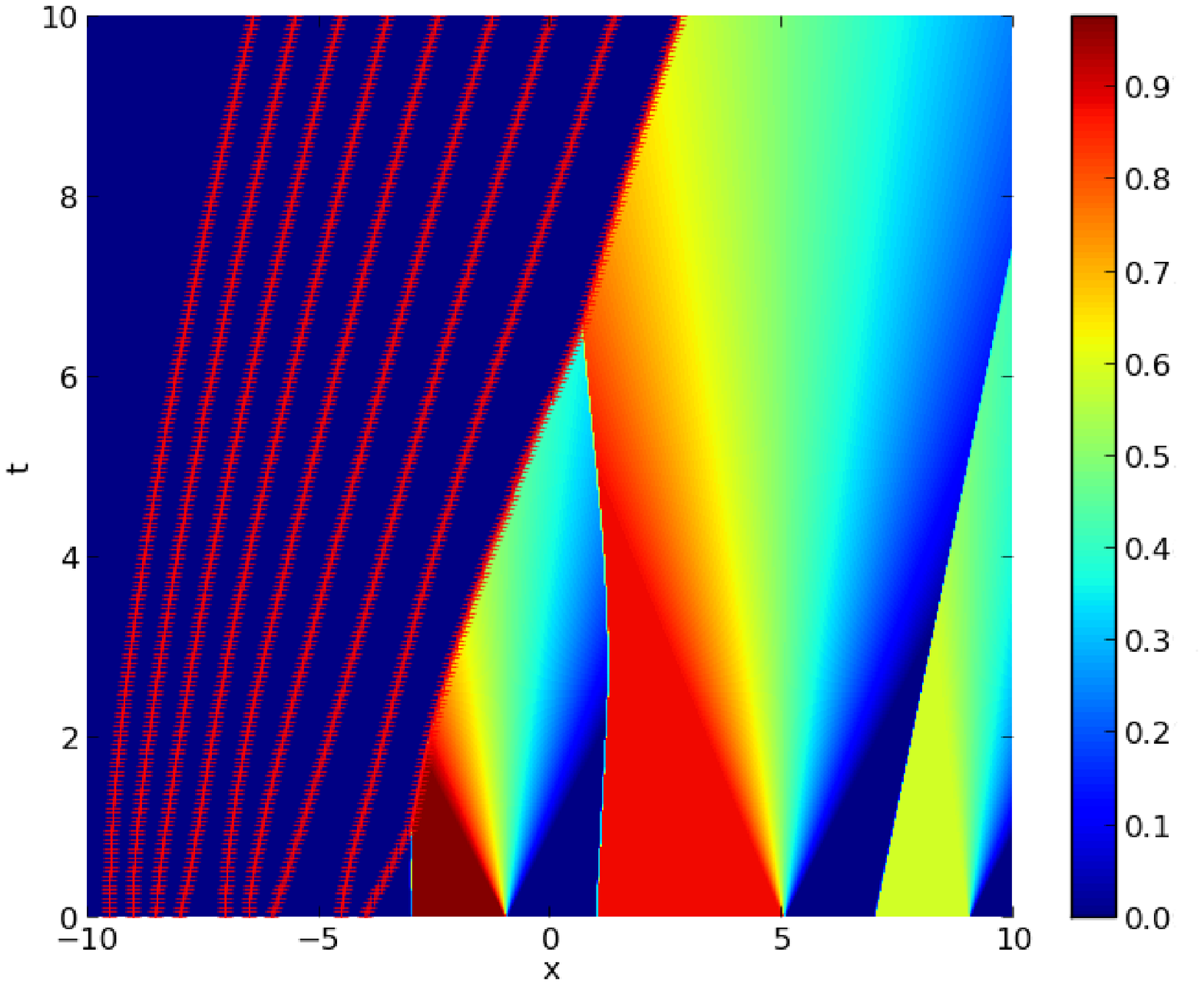}
  \Caption{Numerical integration of the FtL--LWR
    model~(\ref{eq:FtL-LWR})--(\ref{eq:1})--(\ref{eq:Ftl-LWR:id1}). The
    LWR density in the interval $[-3, \, -1]$ is maximal, hence the
    traffic speed vanishes there. As a consequence, the first vehicle
    in the microscopic phase reaches the phase boundary at about $t=2$
    and at that time its velocity is discontinuous.
    \label{fig:FtL-LWR}}
\end{figure}

 \begin{figure}[!h]
   \centering
   \includegraphics[width=0.7\textwidth]{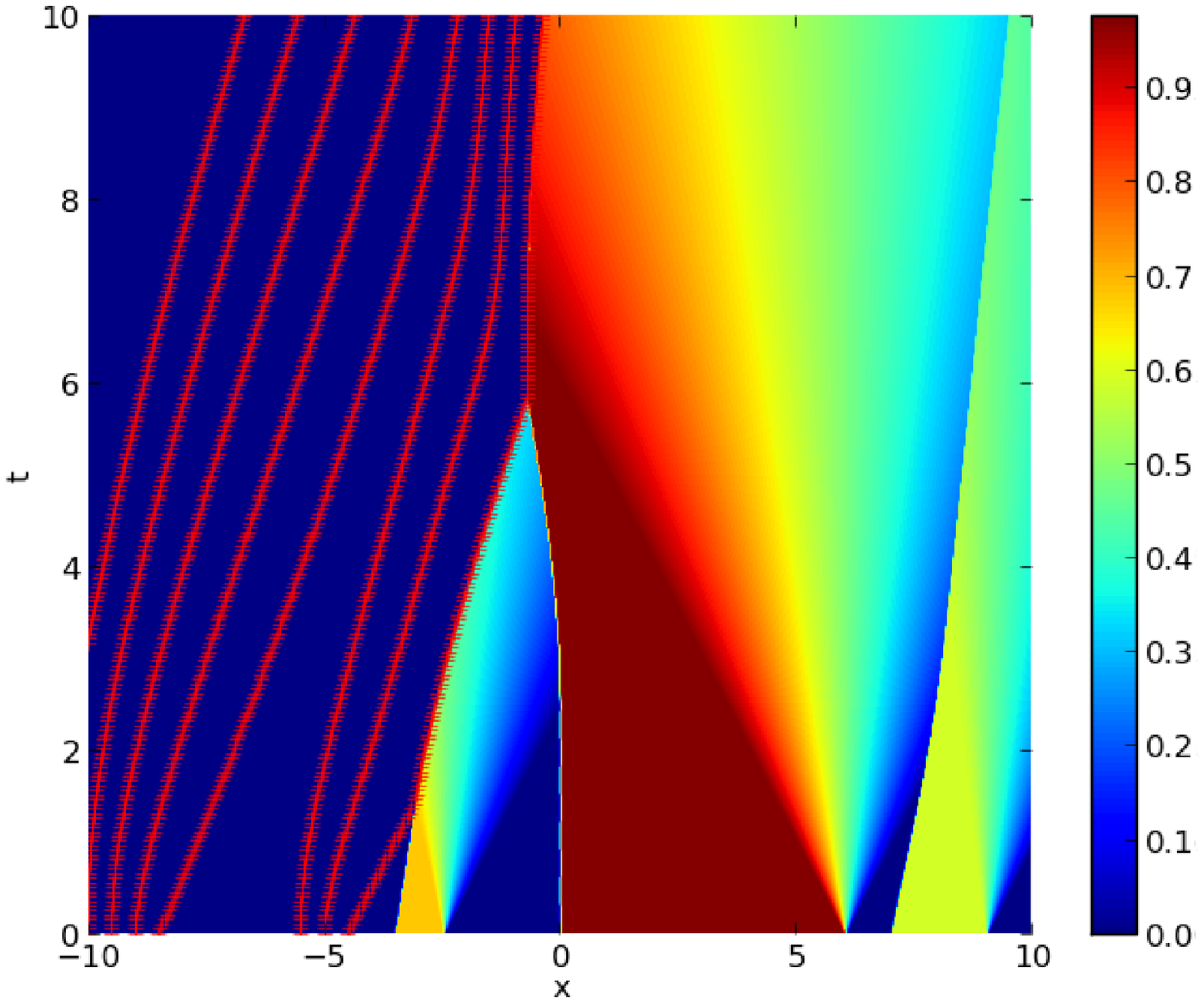}
   \Caption{Numerical integration of the FtL--LWR
     model~(\ref{eq:FtL-LWR})--(\ref{eq:1})--(\ref{eq:Ftl-LWR:id2}). Here,
     we used the same space and time meshes as in the integration
     leading to Figure~\ref{fig:FtL-LWR}. The first vehicle in the
     macroscopic phase reaches the phase boundary at about $t=1.5$ and
     at that time its velocity is discontinuous. In the macroscopic
     phase, at that time, there is an interaction between a shock and
     a rarefaction curve.
     \label{fig:FtL-LWR-2}}
 \end{figure}

 In the other case of the FtL-LWR model~(\ref{eq:FtL-LWR}), we keep
 using the choices~(\ref{eq:1}), but with the initial datum
 \begin{equation}
   \label{eq:Ftl-LWR:id1}
   \begin{array}{rcl}
     \bar \rho(x)
     & = &
     \caratt{[-3,-1]}(x)+0.9\caratt{[1,5]}(x)+0.6\caratt{[7,9]}(x)
     \\
     \bar p
     & = &
     \left[-9.5, -9, -8.5, -8, -7, -6.5, -6, -4.5, -4\right]
   \end{array}
 \end{equation}
 with a mesh $\Delta x = 10^{-3}$ and a time mesh chosen as
 in~(\ref{eq:mesh}). The resulting solution is displayed in the
 $(t,x)$ plane in Figure~\ref{fig:FtL-LWR}. Differently from what
 usually happens in the usual FtL model, here the speed of the first
 vehicle suffers a discontinuity, clearly visible at about $t=1$, due
 to its reaching the interface with the LWR phase.

 \medskip

 The same setting in~(\ref{eq:FtL-LWR}), with the
 choices~(\ref{eq:1}), but with the initial datum
 \begin{equation}
   \label{eq:Ftl-LWR:id2}
   \begin{array}{rcl}
     \bar \rho(x)
     & = &
     0.7\caratt{[-3.5,-2.5]}(x)+\caratt{[0,6]}(x)+0.6\caratt{[7,9]}(x)
     \\
     \bar p
     & = &
     \left[-11, -10, -9.5, -9, -8.5, -5.5, -5, -4.5\right]
   \end{array}
 \end{equation}
 leads to the representation in Figure~\ref{fig:FtL-LWR-2}.

 The initial density in the LWR phase is maximal in the interval
 $\left[0,6 \right]$. This situation has consequences also the
 microscopic phase. First, the speed of the leader suffers a
 discontinuity, clearly visible at about $t=1.5$, due to its reaching
 the interface with the LWR phase. Then, the drivers behind the leader
 have to brake.

 \medskip

 The figures above explain how the two micro- and macroscopic
 descriptions coexist in a single model. There is a clear backward
 propagating exchange of information between the different phases,
 although there is no exchange of mass.

 \section{Technical Details}
 \label{sec:TD}

 The following Lemma deals with the ordinary differential
 system~\eqref{eq:FtL}. Its proof reminds that
 of~\cite[Proposition~4.1]{ColomboMarcelliniRascle}.

 \begin{lemma}
   \label{lem:ode}
   Let $v$ satisfy~\textbf{(v)} and $\ell >0$. Choose $\bar p \in
   \mathcal{P}_\ell^n$. Let $w \in \Lloc1 (\reali^+; \reali^+)$. Then,
   the Cauchy problem
   \begin{equation}
     \label{eq:4}
     \left\{
       \begin{array}{ll}
         \dot p_i = v\left(\frac{\ell}{p_{i+1}- p_i}\right)
         \qquad i=1, \ldots, n-1
         \\
         \dot p_{n} = w (t)
         \\
         p_i (0) = \bar p_i
       \end{array}
     \right.
   \end{equation}
   admits a unique solution $p=p(t)$ defined for all $t \in \reali^+$
   and attaining values in $\mathcal{P}_\ell^n$. Moreover, if $w' \in
   \Lloc1 (\reali^+; \reali^+)$, $\bar p' \in \mathcal{P}_\ell$ and
   $p' = p'(t)$ is the corresponding solution to~\eqref{eq:4}, the
   following stability estimate holds
   \begin{equation}
     \label{eq:estimate}
     \norma{p(t) - p'(t)}
     \leq
     \left(
       \norma{ \bar p - \bar p'}
       +
       \norma{w-w'}_{\L1 ([0,t])}
     \right)
     \exp{\left( 2\frac{\Lip(v)}{\ell}t\right) },
   \end{equation}
   for every $t\in \left] 0,+\infty\right[$.
 \end{lemma}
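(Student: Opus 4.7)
The plan is to treat~\eqref{eq:4} as a non-autonomous ODE on the closed admissible set $\mathcal{P}_\ell^n$. Observe first that the right hand side
$F_i(p) = v\!\left(\ell/(p_{i+1}-p_i)\right)$ for $i<n$, together with the given forcing $w(t)$ for $i=n$, is defined and smooth on the open set $\{p_{i+1}>p_i\}$, in particular on the interior of $\mathcal{P}_\ell^n$. Since $v \in \C{2}([0,1];\reali^+)$ and the map $x\mapsto \ell/x$ is smooth for $x\geq \ell$, each $F_i$ is Lipschitz in $p$ with constant controlled by $\Lip(v)/\ell$, because
\begin{displaymath}
  \left|\frac{d}{dx} v(\ell/x)\right|
  = \left|v'(\ell/x)\right|\,\frac{\ell}{x^{2}}
  \leq \frac{\Lip(v)}{\ell}\qquad\text{for }x\geq\ell.
\end{displaymath}
Standard Carathéodory theory (since $w$ is only $\L{1}_{loc}$) therefore produces a unique local absolutely continuous solution starting from $\bar p\in\mathcal{P}_\ell^n$.

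Next I would show that the solution cannot leave $\mathcal{P}_\ell^n$, so that it is globally defined on $\reali^+$ and the Lipschitz bound above remains uniformly valid. At any instant $t$ and index $i<n$ where $p_{i+1}(t)-p_i(t)=\ell$, one has $\dot p_i(t) = v(1)=0$ by~\textbf{(v)}, while $\dot p_{i+1}(t) \in \{v(\ell/(p_{i+2}-p_{i+1})),\,w(t)\}$ is non-negative (using $v\geq 0$ and $w\geq 0$). Hence $\frac{d}{dt}(p_{i+1}-p_i)\geq 0$ on the constraint boundary, a classical tangency condition ensuring forward invariance; together with the \emph{a priori} bound $\dot p_i\in[0,\max v]\cup\{w(t)\}\in\L{1}_{loc}$, it rules out blow-up and yields a global solution in $\mathcal{P}_\ell^n$.

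For the stability estimate~\eqref{eq:estimate}, consider the difference $q_i(t)=p_i(t)-p'_i(t)$, which is absolutely continuous. Using the Lipschitz bound derived above,
\begin{displaymath}
  |\dot q_i(t)| \leq \frac{\Lip(v)}{\ell}\bigl(|q_i(t)|+|q_{i+1}(t)|\bigr)\quad\text{for }i<n,
  \qquad
  |\dot q_n(t)| \leq |w(t)-w'(t)|,
\end{displaymath}
and summing (in the $\ell^{1}$ norm, which is the one that reproduces the constant $2\Lip(v)/\ell$; each index $i$ appears at most twice) gives
\begin{displaymath}
  \frac{d}{dt}\norma{q(t)}
  \leq \frac{2\Lip(v)}{\ell}\,\norma{q(t)} + |w(t)-w'(t)|
  \quad\text{for a.e.~}t.
\end{displaymath}
A standard Grönwall argument applied to $e^{-2\Lip(v)t/\ell}\norma{q(t)}$ yields~\eqref{eq:estimate} directly.

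The only step that is not routine is the invariance argument of the second paragraph: one must handle the possibility that several gaps hit $\ell$ simultaneously and that $w$ is merely $\L{1}_{loc}$, so $\dot p_n$ need not be continuous. I would therefore phrase the tangency condition almost everywhere and, if needed, approximate $w$ by continuous $w^\epsilon$, pass to the limit in the uniform estimate~\eqref{eq:estimate} (which does not depend on any modulus of continuity of $w$) and recover the general case by the $\L{1}$ stability just established.
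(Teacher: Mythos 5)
Your overall strategy (local existence, forward invariance of $\mathcal{P}_\ell^n$, then Gronwall) is the same as the paper's, and the stability part is correct and essentially identical to the published argument: the Lipschitz constant $\Lip(v)/\ell$ per gap, the factor $2$ after collecting terms, and the $\L1$ dependence on $w-w'$ all match; the paper works componentwise with the integral form of Gronwall (in effect in the sup norm) where you use the differential form in the $\ell^1$ norm, which is an immaterial difference. The genuine gap is in the existence/invariance step. By \textbf{(v)} the speed law $v$ is defined only on $[0,1]$, so $v\left(\ell/(p_{i+1}-p_i)\right)$ makes sense only when $p_{i+1}-p_i\geq\ell$: contrary to your first paragraph, the right hand side of~\eqref{eq:4} is \emph{not} defined on the open set $\left\{p_{i+1}>p_i\right\}$, but only on the closed set $\mathcal{P}_\ell^n$ itself. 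Since initial data with some gap exactly equal to $\ell$ are admissible, ``standard Carath\'eodory theory'' does not directly produce a local solution (it requires the field on a neighborhood of $\bar p$), and your Nagumo-type tangency argument (``derivative of the gap is nonnegative on the constraint boundary, hence forward invariance'') likewise presupposes a field defined, with suitable regularity, near the boundary, or else an explicit viability/invariance theorem for Carath\'eodory fields on closed sets, which you neither state nor prove; as written it is a heuristic, and your proposed remedy (a.e.\ tangency plus smoothing of $w$) does not address this domain-of-definition issue.

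The paper resolves exactly this point by first extending $v$ to a bounded Lipschitz function $u$ on all of $\reali$, equal to $v(0)$ below $0$ and to $0$ above $1$, see~\eqref{eq:5}. The extended system then has a classical solution as long as the gaps are merely positive, and invariance of $\mathcal{P}_\ell^n$ follows from an elementary contradiction argument rather than from a tangency principle: if a gap dropped below $\ell$ on some interval, then on that interval the follower's speed $u\left(\ell/(p_{i+1}-p_i)\right)$ vanishes (the argument exceeds $1$) while the vehicle ahead moves with nonnegative speed ($u\geq 0$, $w\geq 0$), so the gap cannot fall below its value $\ell$ at the last crossing time. If you either adopt such an extension or cite a precise Carath\'eodory version of the invariance theorem for closed sets, your argument closes, and the estimate~\eqref{eq:estimate} then follows exactly as you wrote it.
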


\begin{proof}
  By~\textbf{(v)}, the function $v$ can be extended to a bounded
  Lipschitz function $u$ defined on all $\reali$ setting
  \begin{equation}
    \label{eq:5}
    u (\rho)
    =
    \left\{
      \begin{array}{l@{\qquad\mbox{ if}\qquad}r@{\,}c@{\,}l}
        v (0) & \rho & < & 0
        \\
        v (\rho) & \rho & \in & [0, 1]
        \\
        0 & \rho & > & 1\,.
      \end{array}
    \right.
  \end{equation}
  Now we consider the Cauchy problem
  \begin{equation}
    \label{eq:4bis}
    \left\{
      \begin{array}{ll}
        \dot p_i = u\left(\frac{\ell}{p_{i+1}- p_i}\right)
        & i=1, \ldots, n-1
        \\
        \dot p_{n} = w (t)
        \\
        p_i (0) = \bar p_i
        & i=1, \ldots, n \,.
      \end{array}
    \right.
  \end{equation}
  By the standard ODE theory, there exists a $\C1$ solution $p = p
  (t)$ defined as long as $p_{i+1}-p_i >0$ for all $i=1, \ldots, n-1$.
  We now prove that in fact $p_{i+1}(t) - p_i(t) \geq l$ for every $t
  \geq 0$. To this aim, we assume by contradiction that there exists
  $t^{*}$ in $\reali^+$, such that $p_{i+1}(t^{*}) - p_i(t^{*})< l$.
  Then, since $p_{i+1}(0)-p_{i}(0)=\bar p_{i+1}- \bar p_{i}\geq l$,
  there exists $\bar t$ in $\reali^+$, with $\bar t<t^{*}$, such that
  $p_{i+1}(\bar t) - p_i(\bar t) = l$ and $p_{i+1}(t) -p_i(t)<l$ for
  every $t \in \left] \bar t, t^{*} \right]$. Since $u(\rho)=0$ for
  every $\rho>1$, for every $t \in \left] \bar t, t^{*} \right]$, we
  have
  \begin{displaymath}
    p_i(t)
    =
    p_i(\bar t) + \int_{\bar t}^{t} \! \dot p_i (s) \, ds
    =
    p_i(\bar t) + \int_{\bar t}^{t}
    u \left( \frac{\ell}{p_{i+1}(s) - p_i(s)} \right) ds
    =
    p_i(\bar t) .
  \end{displaymath}
  This yields a contradiction, since for every $t \in \left] \bar t,
    t^{*} \right]$ and for $i=1, \ldots,n-1$,
  \begin{displaymath}
    p_{i+1}(t) -p_i(t)
    \geq p_{i+1}(\bar t) - p_i( \bar t) = l,
  \end{displaymath}
  completing the existence proof.

  To prove the estimate~\eqref{eq:estimate}, observe that the right
  hand side in~\eqref{eq:4} is Lipschitz continuous, indeed
  \begin{equation}
    \label{eq:lip}
    \modulo{
      v\left(\frac{\ell}{p_{i+1}- p_i}\right)
      -
      v\left(\frac{\ell}{p_{i+1}'- p_i'}\right)
    }
    \leq
    \frac{\Lip(v)}{\ell}
    \left( \modulo{p_{i+1}-p_{i+1}'} + \modulo{p_{i}-p_{i}'}\right) \,.
  \end{equation}
  for $i = 1, \ldots, n-1$. Hence, by~\eqref{eq:lip},
  \begin{eqnarray}
    \nonumber
    \modulo{p_i (t) - p_i' (t)}
    & \leq &
    \modulo{\bar p_i - \bar p_i'}
    +
    \int_{0}^{t}
    \modulo{
      v\left(\frac{\ell}{p_{i+1}- p_{i}}\right)
      -
      v\left(\frac{\ell}{p_{i+1}'- p_{i}'}\right)
    } \d{s}
    \\
    \nonumber
    & \leq &
    \modulo{\bar p_i - \bar p_i'}
    +
    \frac{\Lip(v)}{\ell}
    \int_{0}^{t}
    \left( \modulo{p_{i+1}-p_{i+1}'} + \modulo{p_{i}-p_{i}'} \right) \d{s}
    \\
    \label{eq:i}
    & \leq &
    \norma{\bar p - \bar p'}
    +
    2\frac{\Lip(v)}{\ell}
    \int_{0}^{t} \norma{ p (s) -  p' (s)} \d{s} \,,
  \end{eqnarray}
  On the other hand, For $i=n$, we immediately have
  \begin{equation}
    \label{eq:n}
    \modulo{p_n (t) - p_n' (t)}
    \leq
    \norma{\bar p - \bar p'}
    +
    \norma{w - w'}_{\L1\left( \left[ 0,t\right] \right) } \,.
  \end{equation}
  Hence, \eqref{eq:i} and~\eqref{eq:n} together yield
  \begin{equation*}
    \norma{p (t) - p' (t)}
    \leq
    \norma{\bar p - \bar p'}
    +
    \norma{w - w'}_{\L1\left( \left[ 0,t\right] \right) }
    +
    2\frac{\Lip(v)}{\ell} \int_{0}^{t} \norma{ p (s) -  p' (s)} \d{s}
  \end{equation*}
  and an application of the usual Gronwall Lemma
  gives~\eqref{eq:estimate}.
\end{proof}

\begin{lemma}
  \label{lem:IBVP}
  Let $v$ satisfy~\textbf{(v)}. Fix $\gamma \in \C{0,1} (\reali^+
  \reali)$, $\bar \rho \in (\L1 \cap \BV) (\reali; [0,1])$ and $\tilde
  \rho \in (\L1 \cap \BV) (\reali^+; [0,1])$. Then, the initial --
  boundary value problem
  \begin{equation}
    \label{IBVP}
    \left\{
      \begin{array}{l@{\qquad}rcl}
        \partial_t \rho + \partial_x \left(\rho \, v (\rho)\right) =0
        & x  & < & \gamma (t)
        \\
        \rho (0,x) = \bar \rho (x)
        & x & < & \gamma (0)
        \\
        \rho\left(t, \gamma (t)\right) = \tilde \rho (t)
        & t & \geq  & 0 \,
      \end{array}
    \right.
  \end{equation}
  admits a unique weak entropy solution $\rho \in \C{0,1} (\reali^+;
  (\L1 \cap \BV) \left(\reali; [0,1]) \right)$.

  Moreover, there exists a constant $L > 0$ such that if $\gamma,
  \gamma' \in \C{0,1} (\reali^+ \reali)$ with $\Lip (\gamma), \Lip
  (\gamma') \leq V$ for a $V>0$, $\bar \rho, \bar \rho' \in (\L1 \cap
  \BV) (\reali; [0,1])$ and $\tilde \rho, \tilde \rho' \in (\L1 \cap
  \BV) (\reali^+; [0,1])$, then, the two solutions $\rho= \rho (t,x)$
  and $\rho' = \rho' (t,x)$ to~\eqref{IBVP} satisfy for all $t \in
  \reali^+$
  \begin{equation}
    \label{eq:estimate2}
    \norma{\rho(t) - \rho'(t)}_{\L1}
    \leq
    L\left(
      \norma{\bar \rho - \bar \rho'}_{\L1}
      +
      \norma{\gamma-\gamma' }_{\C0\left( \left[ 0,t\right] \right)}
      +
      (1+2V)
      \norma{\tilde \rho-\tilde \rho'}_{\L1\left( \left[ 0,t\right] \right)}
    \right).
  \end{equation}

\end{lemma}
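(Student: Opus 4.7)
The natural strategy is to straighten the moving boundary via the change of variables $y = x - \gamma(t)$. Setting $u(t,y) = \rho(t, y + \gamma(t))$, the problem~\eqref{IBVP} transforms into a classical initial--boundary value problem on the fixed half-line $\{y<0\}$, namely
\begin{displaymath}
  \partial_t u + \partial_y\bigl(u\, v(u) - \dot\gamma(t)\, u\bigr) = 0 \,, \qquad u(0,y) = \bar\rho(y+\gamma(0)) \,, \qquad u(t,0) = \tilde\rho(t) \,,
\end{displaymath}
with flux $f(t,u) = u\, v(u) - \dot\gamma(t)\, u$ which is Lipschitz in $u$ uniformly in $t$, with Lipschitz constant bounded by $\Lip(v) + V$. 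Existence and uniqueness of a weak entropy solution, together with the $\L1\cap\BV$ regularity, follow from the classical theory of scalar conservation laws on a half-space with a Lipschitz flux measurable in time and boundary conditions in the sense of~\cite{DuboisLefloch}. Reverting the change of variables gives the required $\rho \in \C{0,1}\bigl(\reali^+; (\L1\cap\BV)(\reali;[0,1])\bigr)$.

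For the stability estimate~\eqref{eq:estimate2}, I first treat the case $\gamma = \gamma'$. Apply Kru\v zkov's doubling of variables to the two transformed problems, keeping track of the boundary contribution on $\{y=0\}$. Since the flux of the straightened problem has Lipschitz constant at most $\Lip(v) + V$, the boundary term is bounded by $\bigl(\Lip(v) + V\bigr) \, \norma{\tilde\rho - \tilde\rho'}_{\L1([0,t])}$; together with $\L1$ continuous dependence on the initial datum this yields
\begin{displaymath}
  \norma{u(t,\cdot) - u'(t,\cdot)}_{\L1} \leq \norma{\bar u - \bar u'}_{\L1} + \bigl(\Lip(v) + V\bigr) \, \norma{\tilde\rho - \tilde\rho'}_{\L1([0,t])} \,,
\end{displaymath}
which accounts for the factor $(1+2V)$ up to constants absorbed into $L$.

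To remove the restriction $\gamma = \gamma'$, I assume without loss of generality $\gamma(t) \leq \gamma'(t)$ pointwise (splitting $[0,t]$ into the two sign regions otherwise) and extend $\rho$ and $\rho'$ by $0$ outside their respective domains. On the strip $\{\gamma(t) < x < \gamma'(t)\}$ only $\rho'$ is non-trivial, and the bound $\rho' \in [0,1]$ gives a contribution to $\norma{\rho - \rho'}_{\L1}$ of at most $\norma{\gamma-\gamma'}_{\C0([0,t])}$. On the common region $\{x < \gamma(t)\}$ both solutions satisfy the same scalar law; comparing $\rho$ with the restriction of $\rho'$ to this region produces an IBVP on a common domain with boundary data $\tilde\rho(t)$ against the interior trace $\rho'(t,\gamma(t)-)$, to which the estimate of the previous paragraph applies.

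The main obstacle is this last comparison: controlling, in $\L1$ in time, the discrepancy between the trace $\rho'(t,\gamma(t)-)$ and the prescribed $\tilde\rho'(t) = \rho'(t,\gamma'(t)-)$ in terms only of $\norma{\gamma-\gamma'}_{\C0}$. A pointwise bound on such traces is not available in full generality, so one has to argue through a Kuznetsov-type entropy inequality applied in the original $(t,x)$ coordinates: the boundary displacement enters only multiplied by the uniformly bounded density $\rho' \in [0,1]$, so that after integration in time the resulting term is again bounded by $\norma{\gamma-\gamma'}_{\C0([0,t])}$. Summing the strip and interior contributions gives~\eqref{eq:estimate2} with a constant $L$ depending only on $\Lip(v)$ and $V$.
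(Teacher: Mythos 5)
Your route is genuinely different from the paper's, but it has a real gap at exactly the point you yourself call ``the main obstacle'': the dependence on the boundary curve measured by $\norma{\gamma-\gamma'}_{\C0([0,t])}$. Two concrete problems arise. First, after straightening, the curve enters the flux only through $\dot\gamma$, so any Kuznetsov/flux-perturbation estimate in the $(t,y)$ variables produces a term of the type $\int_0^t\modulo{\dot\gamma(s)-\dot\gamma'(s)}\,\tv\left(u(s,\cdot)\right)\d{s}$, i.e.\ controlled by $\norma{\dot\gamma-\dot\gamma'}_{\L1}$, which is \emph{not} bounded by $\norma{\gamma-\gamma'}_{\C0}$ (take $\gamma-\gamma'$ small in sup norm but rapidly oscillating). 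Second, your fallback in the original coordinates requires bounding $\int_0^t\modulo{\rho'\left(s,\gamma(s)-\right)-\rho'\left(s,\gamma'(s)-\right)}\d{s}$; traces of $\BV$ entropy solutions are not continuous in the position of the curve (a shock travelling close to the boundary keeps this integrand of order one on a time set whose measure need not shrink with $\norma{\gamma-\gamma'}_{\C0}$), and the sentence ``the boundary displacement enters only multiplied by the uniformly bounded density, so after integration in time the term is bounded by $\norma{\gamma-\gamma'}_{\C0}$'' is an assertion, not an argument: integrating in time \emph{accumulates} these order-one trace discrepancies, it does not make them small. Hence the crucial part of~\eqref{eq:estimate2}, the Lipschitz dependence on $\gamma$ in the $\C0$ norm uniformly in $t$, is not established. (Also, ``assume WLOG $\gamma\le\gamma'$'' is not without loss of generality: the comparison at time $t$ uses the whole interval $[0,t]$, on which the ordering of the two curves can switch arbitrarily often, and restarting the estimate at each switch is not justified.)

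For comparison, the paper does not prove the lemma directly at all: it notes that the scalar LWR equation is a Temple system satisfying (H1)--(H3) of~\cite{ColomboGroli4}, so that the non-characteristic IBVP well-posedness theorem there (Theorem~2.3, built on wave-front tracking, with boundary conditions in the sense of~\cite{DuboisLefloch}) applies verbatim and already contains existence, uniqueness and~\eqref{eq:estimate2}, including precisely the $\C0$ dependence on $\gamma-\gamma'$ and the $(1+2V)$ factor. If you want a self-contained proof along your lines, the boundary-trace comparison is the heart of the matter and must actually be carried out; note also two smaller unaddressed points: after straightening the flux is only $\L\infty$ in time (since $\gamma$ is merely Lipschitz), so ``classical theory'' must be the one allowing measurable time dependence, and the claimed $\C{0,1}$-in-time regularity of $t\mapsto\rho(t,\cdot)$ in $\L1$ requires a uniform bound on $\tv\left(\rho(t,\cdot)\right)$, which your sketch never derives.
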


The initial -- boundary value problem in~\eqref{IBVP} falls within the
framework of~\cite{ColomboGroli4}, see also~\cite{DuboisLefloch,
  LeFloch1}. Indeed, the scalar conservation law~\eqref{eq:LWR} is a
particular case of a Temple systems, see~\cite[(H1), (H2)
and~(H3)]{ColomboGroli4}. Hence, \cite[Theorem~2.3]{ColomboGroli4}
applies and Lemma~\ref{lem:IBVP} follows.

\begin{proofof}{Proposition~\ref{prop:LWR-FtL}}
  In~\eqref{eq:LWR-FtL}, the equations for $p_1, \ldots, p_n$ are
  decoupled from the partial differential equation for $\rho$. Hence,
  Lemma~\ref{lem:ode} applies and ensures the existence of $p = p
  (t)$, with $p (t) \in \mathcal{P}_\ell^n$, solving the ordinary
  differential system for all $t \in \reali^+$.  We then choose $\rho$
  as the solution to the initial -- boundary value problem
  \begin{equation}
    \label{eq:2}
    \left\{
      \begin{array}{l@{\qquad}rcl}
        \partial_t \rho + \partial_x \left(\rho \, v (\rho)\right) =0
        & (t,x) & \in &
        \left\{(t,x) \in \reali^+ \times \reali \colon x<p_1 (t)\right\}
        \\
        \rho (0,x) = \bar \rho (x)
        & x & < & \bar p_1
        \\
        \rho\left(t, p_1 (t)\right) = \frac{\ell}{p_2 (t) - p_1 (t)}
        & t & \in & \reali^+
      \end{array}
    \right.
  \end{equation}
  and we apply Lemma~\ref{lem:IBVP}, obtaining the existence of a map
  $\rho \in \C0 \left([0,T]; (\L1 \cap \BV) (\reali; [0,1] ) \right)$
  solving~\eqref{eq:2} in the usual sense
  of~\cite[Definition~2.1]{DuboisLefloch},
  \cite[Definition~C.1]{AmadoriColombo} or, equivalently,
  \cite[Definition~2.2]{ColomboGroli4}. Therefore,
  \ref{it:def:wes}.~and~\ref{it:def:bd} in Definition~\ref{def:Sol}
  hold, The
  requirements~\ref{def:it:ode}.~and~\ref{def:it:ode1}.~follow from
  Lemma~\ref{lem:ode}.

  The stability estimate related to the ordinary differential system
  follows from Lemma~\ref{lem:ode}. Concerning the partial
  differential equation, by~\eqref{eq:estimate2} we have
  \begin{eqnarray}
    \nonumber
    \norma{\rho(t,\cdot) - \rho'(t,\cdot)}_{\L1}
    & \leq &
    L\left( \norma{\bar \rho - \bar \rho'}_{\L1}+\norma{p_1 -p_1' }_{\C0\left( \left[ 0,t\right] \right) }\right)
    \\
    \label{eq:qui}
    & &
    +
    L ( 1 + 2 V)
    \norma{
      \frac{\ell}{p_2 (\cdot) - p_1 (\cdot)}
      -
      \frac{\ell}{p_2' (\cdot) - p_1' (\cdot)}}_{\L1\left([ 0,t] \right) }.
  \end{eqnarray}
  Compute the term in parentheses separately
  \begin{eqnarray*}
    \norma{
      \frac{\ell}{p_2 (\cdot) - p_1 (\cdot)}
      -\frac{\ell}{p_2' (\cdot)-p_1' (\cdot)}}_{\L1([ 0,t]) }
    & \leq &
    \frac{1}{\ell}
    \int_{0}^{t}
    \left( \modulo{p_2-p_2'}+\modulo{p_1-p_1'}\right) \d{s}
    \\
    & \leq &
    \frac{2}{\ell}\int_{0}^{t} {\norma {p(s)-p'(s)}ds}
    \\
    &\leq &
    \frac{2}{\ell} \, t \, \norma {p-p'}_{\C0\left( \left[ 0,t\right] \right) }
  \end{eqnarray*}
  and inserting the above result in~\eqref{eq:qui},
  using~\eqref{eq:estimate}, we obtain:
  \begin{eqnarray*}
    & &
    \norma{\rho(t,\cdot) - \rho'(t',\cdot)}_{\L1}
    \\
    & \leq &
    L \norma{\bar \rho - \bar \rho'}_{\L1}
    +
    L\norma{p_{1} -p_{1}' }_{\C0\left( \left[ 0,t\right] \right) }
    +
    L \left( 1+2V\right)\frac{2}{\ell}t \norma {p-p'}_{\C0([ 0,t])}
    \\
    & \leq &
    L \norma{\bar \rho - \bar \rho'}_{\L1}
    +
    L\left( 1 +(1+2V) \frac{2}{\ell} \, t\right) \norma{p -p' }_{\C0([0,t])}
    \\
    & \leq &
    L \norma{\bar \rho - \bar \rho'}_{\L1}
    +
    L\left( 1 + (1+2V) \frac{2}{\ell} \, t \right)
    \left( \norma{\bar p - \bar p'} + \norma{w-w'}_{\L1 ([0,t])} \right)
    \exp\left( 2\frac{\Lip(v)}{\ell}t\right)
  \end{eqnarray*}
  completing the proof.
\end{proofof}

\begin{proofof}{Proposition~\ref{prop:FtL-LWR}}
  To construct a solution to~\eqref{eq:FtL-LWR}, we first
  apply~\cite[Theorem~6.3]{BressanLectureNotes} to obtain a Kru\v zkov
  solution $\rho = \rho (t,x)$ to the Cauchy problem for the scalar
  conservation law
  \begin{equation}
    \label{CP}
    \left\{
      \begin{array}{l@{\qquad}rcl}
        \partial_t \rho + \partial_x \left(\rho \, v (\rho)\right) =0
        & (t,x)  & \in & \reali^{+}\times \reali
        \\
        \rho(0,x)=
        \left\{
          \begin{array}{l@{\qquad\mbox{ if}\qquad}r@{\,}c@{\,}l}
            \bar \rho (x)& x & > & \bar p_{n}
            \\
            0 & x & < & \bar p_{n}\,.
          \end{array}
        \right.
      \end{array}
    \right.
  \end{equation}
  Then, we find the free boundary $p_n = p_n (t)$ solving the Cauchy
  problem for the ordinary differential equation
  \begin{equation}
    \label{CP1}
    \left\{
      \begin{array}{l}
        \dot p_n = v\left(\rho\left( t,p_{n}(t)\right) \right)
        \\
        p_{n}(0) = \bar p_{n}\,.
      \end{array}
    \right.
  \end{equation}
  The well posedness of~\eqref{CP1} is ensured
  by~\cite[Theorem~2.4]{ColomboMarson}, which we can apply due
  to~\textbf{(v)}, see also~\cite[Item~1 in Section~2]{ColomboMarson}.

  Next we restrict the solution $\rho = \rho(t,x)$ to~\eqref{CP} to
  $\left\lbrace (t,x)\in\reali^{+}\times \reali:
    x>p_{n}(t)\right\rbrace $. Then, we solve the following system of
  $n-1$ ordinary differential equations
  \begin{equation}
    \label{eq:6}
    \left\{
      \begin{array}{l}
        \dot p_i = v\left(\frac{\ell}{p_{i+1}- p_i}\right)
        \qquad i=1, \ldots, n-1
        \\
        p_{i}(0)=\bar p_{i}\,.
      \end{array}
    \right.
  \end{equation}

  By construction, \ref{it:def:wes}.~in
  Definition~\ref{def:Sol}~holds.  Condition~\ref{it:def:bd}.~is in
  this case empty. The requirement~\ref{def:it:ode1}.~is satisfied
  since $p_n$ solves~\eqref{CP1} and the previous application of
  Lemma~\ref{lem:ode} to~\eqref{eq:6} ensures~\ref{def:it:ode}.

  Passing to the stability estimates, using~\cite[(ii) in
  Theorem~6.3]{BressanLectureNotes}, we have
  \begin{eqnarray*}
    \norma{\rho (t) - \rho' (t)}_{\L1}
    & \leq &
    \int_{\reali} \modulo{
      \bar\rho (x) \, \caratt{[\bar p_n', +\infty[} (x)
      -
      \bar\rho' (x) \, \caratt{[\bar p_n', +\infty[} (x)
    }
    \d x
    \\
    & \leq &
    \norma{\bar \rho - \bar \rho'}_{\L1}
    +
    \modulo{\bar p_n - \bar p_n'}
    \\
    & \leq &
    \norma{\bar \rho - \bar \rho'}_{\L1}
    +
    \norma{\bar p - \bar p'} \,,
  \end{eqnarray*}
  proving~\eqref{eq:prima}. to prove~\eqref{eq:seconda}, we
  use~\cite[Theorem~2.2]{ColomboMarson} to obtain, in the case
  $\bar\rho = \bar \rho'$,
  \begin{displaymath}
    \modulo{p_n (t) - p_n' (t)}
    \leq
    c (t) \, \modulo{\bar p - \bar p'}^\alpha
  \end{displaymath}
  where $c$ is the constant exhibited in~\cite[Item~(2),
  Theorem~2.2]{ColomboMarson} with respect to the interval $[0,t]$ and
  , by~\cite[formula~(2.1)]{ColomboMarson},
  \begin{displaymath}
    1-\alpha
    \geq
    \max_{\rho \in [0,R]}
    \frac{v (\rho) - v (0)}{v (0) - v (\rho) - \rho\, v' (\rho)}
    \quad \mbox{or, equivalently} \quad
    \alpha
    =
    \left(1 + \max _{[0,R]}\frac{v (\rho) - v (0)}{\rho \, v' (\rho)}\right)^{-1}
  \end{displaymath}
  which is finite by~\textbf{(v)}. Finally, \eqref{eq:seconda}
  directly follows from Lemma~\ref{lem:ode}.
\end{proofof}

\begin{proofof}{Proposition~\ref{prop:F}}
  Use the integral form of the conservation law~\eqref{eq:LWR} in the
  region
  \begin{displaymath}
    \Omega
    =
    \left\{
      (\tau,\xi) \in \reali^+ \times \reali
      \colon
      \tau \in [0, t] \mbox{ and } \xi \in [p_{n_j}^j (t), p_1^{j+1} (t)]
    \right\}
  \end{displaymath}
  and obtain:
  \begin{eqnarray*}
    \int_{p_{n_j}^j (t)}^{p_1^{n_{j+1}} (t)} \rho (t, x) \d{x}
    \!\!\!& - &\!\!\!
    \int_{\bar p_{n_j}^j}^{\bar p_1^{j+1}} \bar \rho (x) \d{x}
    =
    \\
    & = &
    \int_0^t
    \left[
      \rho\left(\tau, p_{n_j}^j (\tau) \right)
      \quad
      (\rho \, v)\left(\tau, p_{n_j}^j (\tau) \right)
    \right]
    \left[
      \begin{array}{c}
        -\dot p_{n_j}^j (\tau) \\ 1
      \end{array}
    \right]
    \d\tau
    \\
    & &
    +
    \int_0^t
    \left[
      \rho\left(\tau, p_1^{j+1} (\tau) \right)
      \quad
      (\rho \, v)\left(\tau, p_1^{j+1} (\tau) \right)
    \right]
    \left[
      \begin{array}{c}
        \dot p_1^{j+1} (\tau) \\ -1
      \end{array}
    \right]
    \d\tau
    \\
    & = &
    \int_0^t
    \left[
      \rho\left(\tau, p_{n_j}^j (\tau) \right)
      \quad
      (\rho \, v)\left(\tau, p_{n_j}^j (\tau) \right)
    \right]
    \left[
      \begin{array}{c}
        -v\left(\tau, p_{n_j}^j (\tau) \right)\\ 1
      \end{array}
    \right]
    \d\tau
    \\
    & &
    +
    \int_0^t
    \frac{\ell}{p_2^{j+1} (\tau) - p_1^{j+1} (\tau)}
    \left[
      1
      \quad
      v \left(\frac{\ell}{p_2^{j+1} (\tau) - p_1^{j+1} (\tau)} \right)
    \right]
    \left[
      \begin{array}{c}
        \dot p_1^{j+1} (\tau) \\ -1
      \end{array}
    \right]
    \d\tau
    \\
    & = &
    0
  \end{eqnarray*}
  since $(p,\rho)$ solves~\eqref{eq:3} in the sense of
  Definition~\ref{def:Sol}.
\end{proofof}

{\small{

    \bibliographystyle{abbrv}

    \bibliography{microMACRO}

  }}

\end{document}